\newtheorem{theorem}{Theorem}
\newtheorem{lemma}[theorem]{Lemma}
\newtheorem{remark}[theorem]{Remark}
\newtheorem{definition}[theorem]{Definition}
\newtheorem{corollary}[theorem]{Corollary}
\theoremstyle{definition}
\newcommand{\Bor}{\mathrm{Bor}}
\newcommand{\cC}{\mathcal{C}}
\newcommand{\cD}{\mathcal{D}}
\newcommand{\spt}{\mathrm{spt}}
\newcommand{\N}{\mathbb N}
\newcommand{\T}{\mathbb T}
\renewcommand{\Re}{\text{Re}}
\newcommand{\norm}[1]{ \|  #1 \|}
\def\set4{\mathcal I}
\def\tup14{(1,2,3,4)}
\def\eps{\varepsilon}
\newcommand{\hatf}{\hat{f}}
\newcommand{\hatg}{\hat{g}}
\newcommand{\hath}{\hat{h}}
\newcommand{\cchi}{\Check{\chi}}
\newcommand{\cpsi}{\Check{\psi}}
\newcommand{\hchi}{\hat{\chi}}
\newcommand{\lesim}{\lesssim}
\newcommand{\RapDec}{\mathrm{RapDec}}
\newcommand{\bT}{\mathbf{T}}
\newcommand{\gesim}{\gtrsim}
\newtheorem*{comm*}{Comment}
\newtheorem*{lemma*}{Lemma}
\newtheorem{thm}{Theorem}[section]
\newtheorem{cor}[thm]{Corollary}
\newtheorem{prop}[theorem]{Proposition}
\newcommand{\R}{\mathbb{R}}
\newcommand{\de}{\delta} 
\begin{document}

 \author{Paige Bright}
 \author{Yuqiu Fu}
 \author{Kevin Ren}
 \address{Department of Mathematics,
 Massachusetts Institute of Technology}
 \email{paigeb@mit.edu}
 \email{yuqiufu@mit.edu}
 \address{Department of Mathematics, Princeton University}
 \email{kevinren@princeton.edu}

\keywords{projection theory, radial projections, exceptional sets}
\subjclass[2020]{28A75, 28A78}

\date{}

\title{Radial Projections in $\R^n$ Revisited}

\begin{abstract}
We generalize the recent results on radial projections by Orponen, Shmerkin, Wang \cite{orponen2022kaufman} using two different methods. In particular, we show that given $X,Y\subset \R^n$ Borel sets and $X\neq \emptyset$. If $\dim Y \in  (k,k+1]$ for some $k\in \{1,\dots, n-1\}$, then 
\[
\sup_{x\in X} \dim \pi_x(Y\setminus \{x\}) \geq \min \{\dim X + \dim Y - k, k\}.
\]
Our results give a new approach to solving a conjecture of Lund-Pham-Thu in all dimensions and for all ranges of $\dim Y$. 

The first of our two methods for proving the above theorem is shorter, utilizing a result of the first author and Gan \cite{bright2023exceptional}. Our second method, though longer, follows the original methodology of Orponen--Shmerkin--Wang, and requires a higher dimensional incidence estimate and a dual Furstenberg-set estimate for lines. These new estimates may be of independent interest.
\end{abstract}

\maketitle

\section{Introduction}

In this paper, we study the relationship between Hausdorff dimension and \textit{radial projections} $\pi_x:\R^n \setminus x \to \mathbb{S}^{n-1}$, defined via the equation 
\[
\pi_x(y) := \frac{y-x}{|y-x|}, \hspace{1cm} y\in \R^n \setminus \{x\}.
\]

In particular, we generalize recent work of Orponen, Shmerkin, and Wang to higher dimensions. Here is our main result:
\begin{theorem}\label{main}
    Let $X,Y\subset \R^n$ be Borel sets with $X\neq \emptyset$ and $\dim Y \in (k,k+1]$ for some $k \in \{1,\dots, n-1\}$. Then,
    \[
    \sup_{x\in X} \dim \pi_x(Y\setminus \{x\}) \geq \min\{\dim X + \dim Y - k,k\}.
    \]
\end{theorem}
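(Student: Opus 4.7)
The plan is to pursue the shorter of the two methods hinted at in the abstract, reducing Theorem~\ref{main} to an exceptional set estimate of the first author and Gan \cite{bright2023exceptional}. Concretely, the estimate I would invoke states that for every Borel $Y \subset \mathbb{R}^n$ with $\dim Y \in (k,k+1]$ and every $0 < s < k$,
\[
\dim \bigl\{x \in \mathbb{R}^n : \dim \pi_x(Y\setminus \{x\}) < s \bigr\} \leq s + k - \dim Y.
\]
This is the natural higher-dimensional analogue of the planar Orponen--Shmerkin--Wang exceptional set bound, and should be the main technical input extracted from \cite{bright2023exceptional}.

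Granting this, Theorem~\ref{main} falls out in a few lines. Fix any $s$ with $0 < s < \min\{\dim X + \dim Y - k,\, k\}$; note that both entries on the right are strictly positive under our hypotheses ($\dim Y > k \geq 1$ and $\dim X \geq 0$), so such $s$ exist. Writing $E_s$ for the exceptional set above, the inequality $s < \dim X + \dim Y - k$ rearranges to
\[
\dim E_s \leq s + k - \dim Y < \dim X,
\]
so $X \not\subset E_s$. Any $x \in X \setminus E_s$ satisfies $\dim \pi_x(Y \setminus \{x\}) \geq s$ by definition, and taking the supremum over admissible $s$ yields the claimed lower bound $\min\{\dim X + \dim Y - k, k\}$.

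The main obstacle, therefore, lies entirely in securing the exceptional set estimate with the precise parameters above. One must verify that the Bright--Gan machinery applies to \emph{radial} rather than to orthogonal or $k$-plane projections, that the dimensional hypothesis $\dim Y \in (k, k+1]$ matches, and that the exceptional-set dimension bound $s + k - \dim Y$ is recovered sharply (an $\varepsilon$-loss would be harmless after passing to the supremum over $s$, but a larger loss would weaken the theorem). The boundary case $s = k$ does not arise, since the reduction only uses $s$ strictly below $\min\{\dim X + \dim Y - k, k\}$; in the regime where this minimum equals $k$, it is still the bound $\dim E_s \leq s + k - \dim Y$ for $s \nearrow k$ that forces a point of $X$ to escape every $E_s$.

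If the estimate from \cite{bright2023exceptional} turns out to fall short in some parameter range, the backup is the second method described in the abstract: a longer, OSW-style discretization argument at scale $\delta$ that inputs a higher-dimensional point/$k$-plane incidence bound together with a dual Furstenberg-set estimate for lines. I would expect the main technical cost there to be in the dual Furstenberg estimate, since the point--tube incidence inputs in higher dimensions are by now well developed; but given the abstract characterizes the Bright--Gan route as the shorter proof, the reduction above is most likely how the authors intend to proceed.
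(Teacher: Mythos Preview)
Your approach is essentially the paper's first (shorter) method, but you have overlooked one restriction that the paper treats carefully. The Bright--Gan exceptional set estimate (stated in the paper as Theorem~\ref{RnLundPhamThu}) bounds only
\[
\dim\bigl(\{x \in \mathbb{R}^n \setminus Y : \dim \pi_x(Y) < s\}\bigr) \leq \max\{k + s - \dim Y,\, 0\},
\]
i.e.\ the exceptional set is taken over $\mathbb{R}^n \setminus Y$, not over all of $\mathbb{R}^n$. Your reduction therefore proves Theorem~\ref{main} only under the extra hypothesis $X \cap Y = \emptyset$, and this is exactly how the paper states its direct proof in Section~\ref{sec2}. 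The unrestricted exceptional bound you wrote down is Corollary~\ref{lundphamthu}, which the paper \emph{derives from} Theorem~\ref{main}; using it as an input would be circular. To remove the disjointness hypothesis the paper resorts to the second, OSW-style argument in Sections~\ref{sec3}--\ref{sec6} (incidence estimate, dual Furstenberg estimate, bootstrap for $k=n-1$, then Marstrand to descend to smaller $k$).

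There is also a small boundary issue at $\dim X = 0$. Because the Bright--Gan bound has a $\max\{\cdot,0\}$, the chain $\dim E_s \leq s + k - \dim Y < \dim X$ breaks when $\dim X = 0$: you only get $\dim E_s \le 0$, which does not force $X \not\subset E_s$. The paper handles this separately with a one-line Frostman argument showing $\dim \pi_x(Y) \ge \dim Y - 1 \ge \dim Y - k$ for every single $x$, which already gives the claimed bound in that case.
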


Note that the case when $k=1$ and $n=2$ was solved by Orponen, Shmerkin, and Wang in \cite{orponen2022kaufman}. Similarly, using an analogous method to their Corollary 1.6, we obtain the following result, which was first proved as \cite[Theorem 4.1]{orponen2022kaufman}.
\begin{corollary}\label{lundphamthu}
    Let $Y\subset \R^n$ be a Borel set with $\dim Y\in (k,k+1]$ for some $k \in \{1,\dots, n-1\}.$ Then,
    \[
    \dim \underbrace{(\{x\in \R^n : \dim \pi_x(Y\setminus \{x\}) <s \})}_{X} \leq \max\{k+s - \dim Y, 0\}
    \]
    for all $0\leq  s \leq k$.
\end{corollary}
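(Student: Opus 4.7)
The plan is to deduce Corollary \ref{lundphamthu} from Theorem \ref{main} by the standard contrapositive argument, mirroring the deduction of \cite[Theorem 4.1]{orponen2022kaufman} from their main radial projection estimate. Fix $s \in [0, k]$ and set $X_s := \{x \in \R^n : \dim \pi_x(Y\setminus\{x\}) < s\}$; by construction $\sup_{x \in X_s} \dim \pi_x(Y\setminus\{x\}) \le s$.

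I first treat the regime $s < k$. If $X_s$ is empty the conclusion is trivial, so assume otherwise (the Borel-measurability of $X_s$ follows from standard measurability of $x \mapsto \dim \pi_x(Y\setminus\{x\})$, or one can simply pass to Borel subsets of dimension arbitrarily close to $\dim X_s$). Applying Theorem \ref{main} to the pair $(X_s, Y)$ yields
\[
s \;\ge\; \sup_{x\in X_s} \dim \pi_x(Y\setminus\{x\}) \;\ge\; \min\{\dim X_s + \dim Y - k,\, k\}.
\]
Since $s < k$, the minimum cannot attain the value $k$, forcing $\dim X_s \le k + s - \dim Y$. If the right side is negative, the inequality is impossible, so in fact $X_s = \emptyset$; otherwise we obtain the stated bound. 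In either case $\dim X_s \le \max\{k + s - \dim Y,\, 0\}$.

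The endpoint $s = k$ cannot be handled directly because Theorem \ref{main} becomes vacuous at this threshold (the minimum may always equal $k$). I would instead exploit the countable stability of Hausdorff dimension: writing
\[
X_k \;=\; \bigcup_{m \ge 1} X_{k - 1/m},
\]
(which holds since $\dim \pi_x(Y\setminus\{x\}) < k$ iff the dimension is strictly less than $k - 1/m$ for some positive integer $m$), the previous case gives
\[
\dim X_k \;=\; \sup_m \dim X_{k - 1/m} \;\le\; \sup_m \max\{2k - 1/m - \dim Y,\, 0\} \;=\; \max\{2k - \dim Y,\, 0\},
\]
which is exactly the claimed bound at $s = k$.

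There is no substantive obstacle here beyond the clean invocation of Theorem \ref{main}; the only subtleties are the measurability of $X_s$ (routine, handled by inner Borel approximation if needed) and the limiting argument at the endpoint $s = k$, both of which are standard in this circle of problems.
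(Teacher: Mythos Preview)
Your proof is correct and follows essentially the same approach as the paper: apply Theorem \ref{main} to the exceptional set $X_s$ for $s<k$ and rule out the possibility that the minimum equals $k$, then handle the endpoint $s=k$ via the countable decomposition $X_k=\bigcup_m X_{k-1/m}$ and countable stability of Hausdorff dimension. The only cosmetic difference is that the paper phrases the $s<k$ case as a contradiction (assuming $\dim X_s > \max\{k+s-\dim Y,0\}$) whereas you argue directly; your remark about passing to Borel subsets to apply Theorem \ref{main} is a fair point that the paper leaves implicit.
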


\begin{proof}
    First, suppose $s<k$, and assume for the sake of contradiction that the exceptional set $X$ satisfies the following inequality:
    \[
    \dim X > \max\{k + s - \dim Y, 0\}.
    \]
    Then, note that $X\neq \emptyset$ as $\dim X>0$. Hence, applying Theorem \ref{main}, we have 
    \[
    \sup_{x\in X} \dim \pi_x(Y\setminus \{x\}) \geq \min\{\dim X + \dim Y - k,k\}> s.
    \]
    This is a contradiction with the definition of $X$. In the case $s=k$ simplifies down to noticing $X$ may be written as the following countable union $$X = \bigcup_{j=1}^\infty \{x\in \mathbb{R}^n : \dim \pi_x(Y\setminus \{x\}) < k - 1/j\}.$$
\end{proof}

We prove Theorem \ref{main} in two ways. The first method for proving our main theorem is a direct proof utilizing a result of the first author and Gan \cite[Theorem 1]{bright2023exceptional}, providing a shorter (generalized) proof of \cite[Theorem 1.2]{orponen2022kaufman}. Our second method, though longer, follows the original methodology of Orponen--Shmerkin--Wang's Theorem 1.2. Orponen, Shmerkin, and Wang first reduce the problem to a set of points and tubes with a ``Furstenberg'' type restriction (see \cite[Theorem 3.1]{orponen2022kaufman}). Then, they apply Fu and Ren's planar incidence estimate (see \cite[Theorem 1.5]{fu2022incidence}) and a dual Furtenberg set estimate (see \cite[Lemma 3.13]{orponen2022kaufman}) to complete the proof. The (second) proof of our main result in this paper follows in a similar manner, and thus the largest barriers to proving Theorem \ref{main} using Orponen, Shmerkin, and Wang's framework are 
\begin{itemize}
\item Generalizing the Fu-Ren incidence estimate between points and tubes to points and tubes in $\R^n$.
\item Proving the (generalized) dual Furstenberg estimate in $\R^n$.
\end{itemize}

We find these two estimates, which may be of independent interest, and obtain the following:

\begin{theorem}
    Let $\alpha\in [0,n]$ and $\beta\in [0,2(n-1)]$. Then, for every $\epsilon>0$, there exists a $\de_0 = \de_0(\epsilon)>0$ such that the following holds for all $\de \in (0,\de_0]$. If $P\subset B^n$ is a $(\de,\alpha,\de^{-\epsilon})$-set with $|P| \sim \delta^{-\alpha}$, and $\T$ is a $\de$-separated $(\de, \beta,\de^{-\epsilon})$-set with $|\T| \sim \delta^{-\alpha}$, then
    \[
    I(P,\T) \leq |P||\T| \de^{\kappa (\alpha+\beta-n+1) - 5\epsilon}
    \]
where $\kappa = \min\left\{\frac{n-1}{\alpha + \beta -n+1}, \frac{1}{2}\right \}$.
\end{theorem}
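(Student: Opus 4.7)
The plan is to adapt the high-low frequency argument of Fu--Ren from $\R^2$ to $\R^n$. After a standard dyadic pigeonhole on the number of tubes incident to each point and on the number of points on each tube, and a two-ends reduction in the spirit of Orponen--Shmerkin--Wang, I may assume that the incidence graph is bipartite-regular and that the tubes through any $p \in P$ do not cluster inside an abnormally thin slab through $p$ at any intermediate scale $\rho \in [\delta,1]$; this is what allows the Frostman condition on $\T$ to be exploited locally at each point.

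The core bound is an $L^2$-estimate on $F := \sum_{T \in \T} \mathbf{1}_{T^\delta}$. Fix an intermediate spatial scale $\Delta \in [\delta,1]$ and split $F = F_{\mathrm{low}} + F_{\mathrm{high}}$ via a smooth Fourier cutoff at frequency $\Delta^{-1}$. Using the $(\delta,\alpha,\delta^{-\epsilon})$-Frostman condition on $P$ to smooth $\mathbf{1}_{P^\delta}$ at scale $\Delta$, the low-frequency piece admits a pointwise bound
\begin{equation*}
\|\mathbf{1}_{P^\delta} * \phi_\Delta\|_\infty \;\lesssim\; \delta^{-\epsilon}\,(\Delta/\delta)^{\alpha-n},
\end{equation*}
so pairing with $\|F\|_1 \lesssim |\T|\delta^{n-1}$ gives a satisfactory contribution. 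For the high-frequency part, Plancherel reduces matters to overlaps between the dual slabs $T^* \subset \R^n$, each of thickness $O(1)$ along the direction of $T$ and radius $\delta^{-1}$ in the $(n-1)$ perpendicular directions.

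The main new ingredient, and the expected principal obstacle, is a sharp higher-dimensional slab-overlap estimate. Two dual slabs at direction-angle $\theta$ intersect in a region of volume $\sim \delta^{-(n-2)}/\sin\theta$, and the crucial step is to combine this with the $(\delta,\beta,\delta^{-\epsilon})$-Frostman condition on the direction set of $\T$. I would prove this by parametrizing tubes as (direction, translation) $\in S^{n-1} \times \R^{n-1}$, pushing the Frostman measure forward to the direction sphere, and integrating the $1/\sin\theta$ singularity against the resulting weight; this should yield a Kakeya-type saving of $(\delta/\Delta)^{n-1-\beta}$ over the trivial $L^1$ bound on $\|F_{\mathrm{high}}\|_2^2$. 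Combining the two pieces through Cauchy--Schwarz
\begin{equation*}
\int F_{\mathrm{high}}\,\mathbf{1}_{P^\delta} \;\leq\; \|\mathbf{1}_{P^\delta}\|_2\,\|F_{\mathrm{high}}\|_2
\end{equation*}
and optimizing $\Delta$ produces the exponent $\kappa(\alpha+\beta-n+1)$. The two branches of $\kappa = \min\{(n-1)/(\alpha+\beta-n+1),\,1/2\}$ correspond to whether the optimum $\Delta$ lies strictly inside $[\delta,1]$ (giving $\kappa = (n-1)/(\alpha+\beta-n+1)$) or is pinned at $\Delta = \delta^{1/2}$ by the Kakeya-trivial $L^2$ bound $\|F\|_2^2 \lesssim |\T|\delta^{n-1}$, which takes over at that endpoint.
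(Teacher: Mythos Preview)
Your high--low Fourier outline is in the right spirit, but there is a concrete gap and one structural difference from the paper that you should be aware of.

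\textbf{The gap.} You propose to control the high-frequency piece by ``pushing the Frostman measure forward to the direction sphere'' and integrating the $1/\sin\theta$ overlap kernel. This cannot work as stated: $\T$ is a $(\delta,\beta)$-set in the $2(n-1)$-dimensional affine Grassmannian $\mathcal{A}(n,1)$, and $\beta$ may be as large as $2(n-1)$. The pushforward to the $(n-1)$-dimensional direction sphere therefore destroys the Frostman condition (many parallel tubes collapse to a point), and the resulting bound is vacuous in the range $\beta>n-1$ where the theorem is most interesting. The paper's fix is to work not with the direction measure but with the quantity $M_\T(r)=\sup_{T_r}|\T\cap T_r|$, the maximal number of $\delta$-tubes inside an $r$-tube, which constrains direction \emph{and} position simultaneously. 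At frequency scale $r^{-1}$ one decomposes $\T$ into direction caps of width $\sim r\delta$, observes that any $\xi$ with $|\xi|\sim r^{-1}$ lies in at most $O((D/r)^{n-2})$ of the dual plates (this is the higher-dimensional replacement for your $1/\sin\theta$ claim), and then bounds each cap's contribution by $M_\T(r)$ via an $L^1\times L^\infty$ argument.

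\textbf{The structural difference.} The paper does not run the Fourier argument directly on $(\delta,s,\delta^{-\epsilon})$-sets, and in particular never needs a two-ends reduction or bipartite regularization. Instead it first proves the incidence bound for \emph{Katz--Tao} $(\delta,\alpha,K_\alpha)$-sets of balls and Katz--Tao $(\delta,\beta,K_\beta)$-sets of tubes (where the $M_P(r),M_\T(r)$ bounds are available by definition), using a full dyadic sum over frequency annuli rather than a single optimized cutoff $\Delta$. The passage to $(\delta,s,\delta^{-\epsilon})$-sets is then a one-line application of the decomposition lemma of Orponen--Shmerkin--Wang, which writes any $(\delta,\alpha,\delta^{-\epsilon})$-set as a union of $\lesssim |P|\delta^{\alpha-2\epsilon}$ Katz--Tao $(\delta,\alpha,1)$-sets; summing the Katz--Tao bound over the pieces gives the stated inequality with the $5\epsilon$ loss. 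This route is both shorter and sharper than what you sketch.
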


\begin{theorem}\label{dualFurstEst}
    Let $\alpha \in [0,n-1]$ and $\beta \in (0,n]$. Suppose $E$ is a non-empty $\beta$-dimensional subset of $\R^n$ and $\mathcal{L}$ is a collection of lines with the property that for every point $x \in E$ there exists a $\alpha$-dimensional collection of lines $\mathcal{L}_x \subset \mathcal{L}$ such that every line in $\mathcal{L}_x$ passes through $x$. Then $$\dim \mathcal{L} \geq \gamma(\alpha, \beta),$$ where $\gamma(\alpha, \beta) = \alpha + \min \{\alpha, \beta \}$. 
\end{theorem}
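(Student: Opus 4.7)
The plan is to prove Theorem \ref{dualFurstEst} by a discretized Cauchy--Schwarz argument on the point-line incidence count, the heart of which is a distance-stratified bound on $|\mathcal{L}_x \cap \mathcal{L}_y|$ at scale $\delta$. The argument is intrinsic---requiring neither Theorem~2 nor any Furstenberg-set input---and works uniformly in all dimensions $n$ and for all admissible $(\alpha,\beta)$. Fix $\epsilon > 0$ and a small $\delta > 0$. Via Frostman's lemma together with a standard pigeonhole to uniformize Frostman constants, I first extract a $(\delta, \beta - \epsilon, \delta^{-\epsilon})$-set $E_\delta \subset E$ with $|E_\delta| \sim \delta^{-(\beta - \epsilon)}$, and for each $x \in E_\delta$ a $(\delta, \alpha - \epsilon, \delta^{-\epsilon})$-set $S_x \subset \mathbb{S}^{n-1}$ of directions, $|S_x| \sim \delta^{-(\alpha - \epsilon)}$, identifying each line through $x$ with its unit direction. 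Let $\mathcal{L}_\delta$ be a maximal $\delta$-separated subset of $\{\ell_{x,v} : x \in E_\delta,\ v \in S_x\}$ in the affine Grassmannian; a lower bound on $|\mathcal{L}_\delta|$ will give the desired bound on $\dim \mathcal{L}$.

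For $\ell \in \mathcal{L}_\delta$ let $r(\ell) := \#\{(x,v) : x \in E_\delta,\ v \in S_x,\ \ell_{x,v} \text{ is } \delta\text{-close to } \ell\}$, so that $|I| := \sum_\ell r(\ell) = \sum_x|S_x| \sim \delta^{-(\alpha+\beta-2\epsilon)}$. Cauchy--Schwarz gives
\[
|\mathcal{L}_\delta| \;\geq\; \frac{|I|^2}{\sum_\ell r(\ell)^2} \;=\; \frac{|I|^2}{\sum_{x,y \in E_\delta} M(x,y)},
\]
where $M(x,y) := \#\{(v,w) \in S_x \times S_y : \ell_{x,v} \approx_\delta \ell_{y,w}\}$. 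The key geometric observation is that for $x \neq y$ at distance $r := |x-y| \geq \delta$, any such $(v,w)$ forces the direction $v$ into a spherical cap of radius $\lesssim \delta/r$ around $\pm(y-x)/|y-x|$ (the transversality condition $|P_{v^\perp}(x-y)| \lesssim \delta$) together with $|v-w| \lesssim \delta$. The $(\delta, \alpha - \epsilon)$-set property of $S_x$ then bounds the number of admissible $v$ by $\delta^{-\epsilon}\,r^{-(\alpha-\epsilon)}$, and $\delta$-separation of $S_y$ allows only $O(1)$ values of $w$ per $v$; hence $M(x,y) \lesssim \delta^{-O(\epsilon)}\,|x-y|^{-(\alpha-\epsilon)}$ for $x \neq y$, while $M(x,x) = |S_x|$.

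Stratifying pairs by dyadic $r \in [\delta, O(1)]$ and combining with the Frostman bound $\#\{(x,y) \in E_\delta^2 : |x-y| \lesssim r\} \lesssim \delta^{-O(\epsilon)}\,r^{\beta-\epsilon}\,\delta^{-2(\beta-\epsilon)}$, the pair sum becomes
\[
\sum_{x,y \in E_\delta} M(x,y) \;\lesssim\; \delta^{-O(\epsilon)}\Big(\delta^{-(\alpha+\beta)} + \delta^{-2\beta}\!\!\sum_{r\ \mathrm{dyadic}}\!\! r^{\beta-\alpha}\Big) \;\lesssim\; \delta^{-O(\epsilon)}\,\delta^{-\max(2\beta,\ \alpha+\beta)},
\]
the maximum being attained at $r \sim 1$ when $\beta \geq \alpha$ and at $r \sim \delta$ when $\beta \leq \alpha$. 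Substituting back yields $|\mathcal{L}_\delta| \gtrsim \delta^{O(\epsilon)}\,\delta^{-\min(2\alpha,\ \alpha+\beta)} = \delta^{O(\epsilon)}\,\delta^{-(\alpha + \min\{\alpha,\beta\})}$, and letting $\epsilon \to 0$ along a sequence $\delta \to 0$ proves $\dim \mathcal{L} \geq \alpha + \min\{\alpha,\beta\}$. I expect the main technical obstacle to be the Frostman bookkeeping---extracting $S_x$ with uniform Frostman constants over a large subset of $E_\delta$---which is handled by standard pigeonholing; the borderline case $\alpha = \beta$ contributes only a $\log(1/\delta)$ factor from the dyadic sum, absorbed into $\delta^{-O(\epsilon)}$. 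Once the geometric inequality $M(x,y) \lesssim |x-y|^{-(\alpha-\epsilon)}$ is in place, the dimension arithmetic is automatic.
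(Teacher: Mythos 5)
Your discretized Cauchy--Schwarz argument on the three-fold incidence count $\sum_{x,y}|\mathcal{L}_x\cap\mathcal{L}_y|$ is, at its core, exactly the paper's proof of Lemma~\ref{furstenberg_cardinality}: both lower-bound the count via Cauchy--Schwarz by $|\T|^{-1}(\sum_x|\T_x|)^2$, both bound $|\T_x\cap\T_y|$ for $x\neq y$ using the $(\delta,\alpha)$-set property of the direction sets together with the transversality fact that lines through both $x$ and $y$ have direction in a cap of radius $\lesssim\delta/|x-y|$, and both sum over dyadic $|x-y|$ using the $(\delta,\beta)$-set property of the points. The arithmetic yielding $\delta^{-(\alpha+\min\{\alpha,\beta\})}$ checks out and recovers the paper's cardinality bound. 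So that piece is correct.

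There is, however, a genuine gap in the last step, ``letting $\epsilon\to 0$ along a sequence $\delta\to 0$ proves $\dim\mathcal{L}\geq\gamma$.'' What you have shown is that for every small $\delta$ the set $\mathcal{L}$ contains a $\delta$-separated subset of cardinality $\gtrsim\delta^{-\gamma+O(\epsilon)}$; this is a lower bound on the \emph{lower box dimension} of $\mathcal{L}$, not on its Hausdorff dimension, and the two can differ (already $\{1/n:n\in\N\}$ has lower box dimension $1/2$ but Hausdorff dimension $0$). A Hausdorff dimension lower bound requires controlling covers at all scales simultaneously. The paper handles this via a Hausdorff content argument: in Lemma~\ref{furstenberg} one fixes an \emph{arbitrary} cover $\cC$ of $\T=\bigcup_x\T_x$ by dyadic tubes of thickness ranging over $[\delta,1]$, pigeonholes first in $p$ and then in a ``popular'' radius $r$ so that a positive-density subset of points is covered at scale $r$, and only then applies the cardinality estimate of Lemma~\ref{furstenberg_cardinality} \emph{at that scale $r$} (not at $\delta$) to lower-bound $\sum_{T\in\cC}r(T)^{\gamma}$. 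This content bound, converted via Lemma~\ref{lem:discrete frostman} and Corollary~\ref{cor:discrete frostman A(n, 1)} into a $(\delta,\gamma,O(\delta^{-\xi}))$-set of tubes inside $\T$, is then fed into the reduction of \cite[Lemma~3.3]{hera2020improved} to actually obtain $\dim_{\mathrm H}\mathcal{L}\geq\gamma$. You would need to add this multi-scale pigeonholing step (or, equivalently, build a Frostman measure on $\mathcal{L}$) to close the argument; the single-scale incidence count alone does not suffice.
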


\begin{remark}
\rm{
    Note that Theorem \ref{dualFurstEst} is derived via a special case of \cite[Corollary 2.10]{ren2023discretized} and a Hausdorff content argument based on \cite{hera2020improved}.}
\end{remark}

\noindent These two new results are the focus of Sections \ref{sec3} and \ref{sec4} respectively. After generalizing these results, the bootstrapping scheme of Orponen--Shmerkin--Wang carries over directly to proving Theorem \ref{main} when $k=n-1$ in Section \ref{sec5}. We then deduce all other values of $k$ using Marstrand's projection theorem in Section \ref{sec6}.

\subsection{Recent Developments in Radial Projections} \label{ss:radial projections}

In recent years, many have been studying the effects of radial projections on Hausdorff dimension. The most recent thread of theorems started with the study of radial projections in finite fields studied by Lund, Pham, and Thu \cite{lund2022radial}. In particular, they showed that given $Y\subset \mathbb{F}_q^n$ and $M$ a positive integer with $|Y| \gtrsim q^{n-1}$ and $M \lesssim q^{n-1}$, then 
\[
|\{x\in \mathbb{F}_q^n : |\pi_x(Y)|\leq M\}| \lesssim q^{n-1} M |Y|^{-1}.
\]
This led to the following conjecture in $\R^n$ (when $k=n-1$) put forward by Lund--Pham--Thu, which was later resolved by the first author and Gan:
\begin{theorem}[\cite{bright2023exceptional}, Theorem 1]\label{RnLundPhamThu}
    Let $Y\subset \R^n$ be a Borel set with $\dim Y\in (k,k+1]$ for some $k\in \{1,\dots, n-1\}$. Then, for $0<s<k$
    \[
    \dim (\{x\in \R^n \setminus Y: \dim \pi_x(Y)<s\}) \leq \max\{k + s - \dim Y,0\}
    \]    
\end{theorem}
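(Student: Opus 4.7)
The plan is to derive Theorem \ref{RnLundPhamThu} as an essentially immediate consequence of Corollary \ref{lundphamthu}, which is already in hand above. The key observation is that for any $x \in \R^n \setminus Y$ we have $Y \setminus \{x\} = Y$, and therefore $\pi_x(Y \setminus \{x\}) = \pi_x(Y)$. Consequently, the exceptional set
\[
E := \{x \in \R^n \setminus Y : \dim \pi_x(Y) < s\}
\]
is contained in the set $X = \{x \in \R^n : \dim \pi_x(Y \setminus \{x\}) < s\}$ appearing in Corollary \ref{lundphamthu}. Since Hausdorff dimension is monotone under set inclusion, the bound $\dim X \leq \max\{k + s - \dim Y, 0\}$ transfers directly to $E$. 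The hypothesis $0 < s < k$ in Theorem \ref{RnLundPhamThu} sits strictly inside the range $0 \leq s \leq k$ covered by the corollary, so no extra work is required.

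As an alternative, I could short-circuit the corollary and argue from Theorem \ref{main} directly. Suppose for contradiction that $\dim E > \max\{k+s-\dim Y, 0\}$. Then $E$ is nonempty, and Theorem \ref{main} applied with $X := E$ yields
\[
\sup_{x \in E} \dim \pi_x(Y \setminus \{x\}) \geq \min\{\dim E + \dim Y - k,\, k\} > s.
\]
Because every $x \in E$ lies outside $Y$, we have $\pi_x(Y \setminus \{x\}) = \pi_x(Y)$ for each such $x$, and so the displayed inequality produces some $x \in E$ with $\dim \pi_x(Y) > s$, contradicting the definition of $E$. This mirrors the contradiction argument used in the proof of Corollary \ref{lundphamthu}.

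The only real obstacle is therefore Theorem \ref{main} itself, whose two proofs form the substance of the paper; once it is established, the deduction of Theorem \ref{RnLundPhamThu} is routine. A minor point worth checking is that $E$ (or a Borel subset of $E$ of full Hausdorff dimension) is an admissible input to Theorem \ref{main}; this is a standard descriptive-set-theoretic verification and does not affect the structure of the argument.
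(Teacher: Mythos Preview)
Your deduction is mathematically sound: Corollary \ref{lundphamthu} is a strict strengthening of Theorem \ref{RnLundPhamThu} (it allows $x\in Y$), and the inclusion $E\subset X$ together with monotonicity of Hausdorff dimension gives the result immediately. The alternative contradiction argument via Theorem \ref{main} is likewise correct.

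However, you should be explicit about the logical flow, because the paper does \emph{not} give an independent proof of Theorem \ref{RnLundPhamThu}. It is quoted from \cite{bright2023exceptional} as a known result and then \emph{used as input} for the first proof of Theorem \ref{main} (Section \ref{sec2}). Thus, if you appeal to Theorem \ref{main} or Corollary \ref{lundphamthu} via that first proof, your argument is circular: Theorem \ref{RnLundPhamThu} $\Rightarrow$ Theorem \ref{main} $\Rightarrow$ Corollary \ref{lundphamthu} $\Rightarrow$ Theorem \ref{RnLundPhamThu}. Only the second proof of Theorem \ref{main} (Sections \ref{sec3}--\ref{sec6}, via the incidence estimate and the dual Furstenberg bound) is independent of Theorem \ref{RnLundPhamThu}, and it is that route you must invoke. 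Your sentence ``whose two proofs form the substance of the paper'' glosses over this; you should state that only the second proof is admissible here. With that caveat made explicit, your proposal matches exactly how the paper recovers (and in fact slightly sharpens) the Bright--Gan result as Corollary \ref{lundphamthu}.
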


One can prove the following result (conjectured by Liu \cite{liu2019hausdorff}) as a consequence of Theorem \ref{RnLundPhamThu}. 

\begin{theorem}[\cite{bright2023exceptional}, Theorem 2]\label{liu}
    Let $Y\subset \R^n$ be a Borel set with $\dim Y \in (k-1,k]$ for some $k \in \{1,\dots, n-1\}$. Then we have 
    \[
    \dim (\{x\in \R^n \setminus Y : \dim \pi_x(Y)< \dim Y\}) \leq k.
    \]
\end{theorem}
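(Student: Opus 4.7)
The plan is to argue by contradiction, exploiting the symmetry $\pi_x(y) = -\pi_y(x)$ of radial projections in order to swap the roles of the two sets and then apply Theorem \ref{RnLundPhamThu} with $X$ playing the role of $Y$. Suppose toward a contradiction that
\[
X := \bigl\{ x \in \R^n \setminus Y : \dim \pi_x(Y) < \dim Y \bigr\}
\]
has $\dim X > k$. Passing to a Borel subset of $X$ supporting a suitable Frostman measure, we may assume $\dim X \in (k, k+1]$, which places $X$ squarely in the hypothesis range of Theorem \ref{RnLundPhamThu} with parameter equal to $k \in \{1, \ldots, n-1\}$.

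For any small $\varepsilon > 0$, I would apply Theorem \ref{RnLundPhamThu} with $X$ in the role of $Y$ at the exponent $s := \dim Y - \varepsilon$. The inequality $\dim Y \leq k$ ensures $0 < s < k$ (for $\varepsilon < \dim Y$), so the theorem applies and yields
\[
\dim \bigl\{ y \in \R^n \setminus X : \dim \pi_y(X) < \dim Y - \varepsilon \bigr\} \leq \max\bigl\{ k + \dim Y - \varepsilon - \dim X,\, 0 \bigr\} < \dim Y,
\]
with strict inequality since $\dim X > k$. Consequently, $Y$ contains a Hausdorff-dimension-$\dim Y$ subset $Y_\varepsilon$ of points $y$ satisfying $\dim \pi_y(X) \geq \dim Y - \varepsilon$.

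The main obstacle is then to extract a contradiction from the coexistence of (i) $\dim \pi_x(Y) < \dim Y$ for every $x \in X$, and (ii) $\dim \pi_y(X) \geq \dim Y - \varepsilon$ for $y$ in the full-dimensional subset $Y_\varepsilon \subset Y$. My plan is a point--line incidence argument on the line set $\mathcal L$ consisting of all lines through some $x \in X$ and some $y \in Y$: applying the dual Furstenberg estimate (Theorem \ref{dualFurstEst}) on the $Y_\varepsilon$-side with $\alpha = \dim Y - \varepsilon$ and $\beta = \dim Y$ yields the lower bound $\dim \mathcal L \geq 2(\dim Y - \varepsilon)$. On the $X$-side, hypothesis (i) forces $\dim \mathcal L_x < \dim Y$ for every $x$, and, combined with $\dim X \leq k+1$, this should produce (after pigeonholing $X$ into a subset on which $\dim \pi_x(Y)$ is essentially constant and invoking the $n$-dimensional incidence estimate of Section \ref{sec3}) a matching upper bound on $\dim \mathcal L$ that, once $\varepsilon$ is sufficiently small, undercuts the lower bound. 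Producing this quantitatively sharp upper bound is the technical heart of the argument.
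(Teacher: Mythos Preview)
The paper does not actually prove Theorem \ref{liu}; it cites it from \cite{bright2023exceptional} and only remarks that the proof there proceeds ``via a dimensional reduction \ldots\ as well as a `swapping trick' of Liu.'' Your Steps 1--3 correctly set up the swap: assuming $\dim X>k$, restricting to $\dim X\in(k,k+1]$, and applying Theorem \ref{RnLundPhamThu} with $X$ in the role of $Y$ to produce a full-dimensional $Y_\varepsilon\subset Y$ on which $\dim\pi_y(X)\ge\dim Y-\varepsilon$. This is indeed the opening move of the Bright--Gan/Liu argument.

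The gap is Step 4. Your plan is to bound $\dim\mathcal L$ from below via the $Y_\varepsilon$-side and from above via the $X$-side, and force a collision. The lower bound $\dim\mathcal L\ge 2(\dim Y-\varepsilon)$ from Theorem \ref{dualFurstEst} is fine. But on the $X$-side you only know $\dim\mathcal L_x<\dim Y$ for each $x\in X$ and $\dim X\le k+1$. The dual Furstenberg estimate is a \emph{lower} bound on $\dim\mathcal L$, so it gives nothing here; and the incidence estimate of Section \ref{sec3} upper-bounds $I(P,\T)$, which combined with per-point incidence lower bounds again yields a \emph{lower} bound on $|\T|$, not an upper bound. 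Even granting the (non-automatic) Fubini-type inequality $\dim\mathcal L\le\dim X+\sup_x\dim\mathcal L_x$, the right-hand side is at most $(k+1)+\dim Y$, which exceeds $2\dim Y$ because $\dim X>k\ge\dim Y$. So the two bounds never collide, and no choice of $\varepsilon$ rescues this. The line-set double-counting strategy simply does not close in this regime.

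The argument in \cite{bright2023exceptional} does not pass through $\dim\mathcal L$ at all. After the swap, Liu's trick is a measure-level symmetry: for disjoint $X,Y$ and Frostman measures $\mu,\nu$, the pushforwards satisfy $\pi_x\nu$ and $\pi_y\mu$ are linked pointwise via $\pi_x(y)=-\pi_y(x)$, so an energy/Frostman statement on one side transfers directly to the other. Combined with the dimensional reduction (projecting to a generic $(k+1)$-plane to land in the top-codimension case), this yields the contradiction without any incidence or line-set bound. Your Step 4 should be replaced by that direct measure-theoretic swap rather than a two-sided estimate on $\mathcal L$.
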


Theorem \ref{liu} followed via a dimensional reduction by the first author and Gan, as well as a ``swapping trick'' of Liu. Note that finite field analogues of the above two theorems has recently been proven in \cite{bright2023radial} by the first author, Lund, and Pham. Furthermore, shortly after the first author's and Gan's proofs, Orponen, Shmerkin, and Wang proved a plethora of new results regarding radial projections (including Theorems \ref{RnLundPhamThu} and \ref{liu}). Among these new results were the following two theorems:

\begin{theorem}[\cite{orponen2022kaufman}, Theorem 1.1]\label{OSW1.1}
    Let $X\subset \R^2$ be a (non-empty) Borel set which is not contained in any line. Then, for every Borel set $Y\subset \R^2$,  
    \[
    \sup_{x\in X} \dim \pi_x(Y\setminus \{x\}) \geq \min \{\dim X, \dim Y,1\}.
    \]
\end{theorem}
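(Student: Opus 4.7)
The plan is to split on the magnitude of $\dim Y$ and reduce each range to a result already established in the paper; the non-collinearity hypothesis on $X$ will only enter in the low-dimensional regime. I would organize the proof into three blocks.

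First, when $\dim Y \in (1,2]$, I would apply Theorem \ref{main} directly with $n=2$ and $k=1$, obtaining
\[
\sup_{x\in X}\dim\pi_x(Y\setminus\{x\}) \ \geq\ \min\{\dim X+\dim Y-1,\ 1\}\ \geq\ \min\{\dim X,\ 1\},
\]
since $\dim Y\geq 1$. Because $\min\{\dim X,\dim Y,1\}=\min\{\dim X,1\}$ throughout this range, this settles the case without using the geometric hypothesis on $X$.

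Next, when $\dim Y \in (0,1]$, the target lower bound is $\min\{\dim X,\dim Y\}$. Setting $E=\{x\in\R^2\setminus Y:\dim\pi_x(Y)<\dim Y\}$, Theorem \ref{liu} with $k=1$ gives $\dim E\leq 1$. If $\dim X>1$, then $X\not\subset E$ automatically, and any $x\in X\setminus E$ yields $\dim\pi_x(Y)\geq\dim Y\geq\min\{\dim X,\dim Y\}$. The degenerate case $\dim Y=0$ makes the bound trivial.

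The remaining case $\dim X,\dim Y\in(0,1]$ is where I expect the main obstacle: both $X$ and the exceptional set $E$ may be $1$-dimensional, so dimension counting alone fails, and one must genuinely exploit that $X$ is not contained in a line. My approach is a Frostman-energy argument. Fixing $s<\min\{\dim X,\dim Y\}$, choose Frostman measures $\mu$ on $X$ and $\nu$ on $Y$ of exponents just below $\dim X$ and $\dim Y$, and attempt to bound
\[
\int\!\!\int\!\!\int |\pi_x(y)-\pi_x(y')|^{-s}\,d\nu(y)\,d\nu(y')\,d\mu(x)<\infty.
\]
Since $|\pi_x(y)-\pi_x(y')|$ is comparable to the sine of the angle at $x$ in the triangle $xyy'$, by an area computation the integrand is controlled by the distance from $x$ to the chord through $y,y'$. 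Pigeonholing dyadically in this chord-distance reduces the estimate to a Furstenberg-type incidence count between a $\mu$-thick subset of $X$ and the $\delta$-neighborhoods of chords of $Y$, and the hypothesis that $X$ is not contained in a line is precisely what prevents $\mu$ from concentrating on any single such tube. Feeding this into a dual-Furstenberg incidence bound of the flavor of Theorem \ref{dualFurstEst} should deliver finite energy, giving $\dim\pi_x(Y)\geq s$ for $\mu$-a.e.\ $x\in X$. Making the pigeonholing sharp enough to push $s$ all the way up to $\min\{\dim X,\dim Y\}$ is the technical heart of the argument.
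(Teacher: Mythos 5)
The paper does not reprove this statement at all: Theorem~\ref{OSW1.1} is simply quoted from Orponen--Shmerkin--Wang as background, and none of the machinery developed here (Theorems~\ref{main}, \ref{incidence2}, \ref{dualFurstEst}, \ref{main_disc}) is aimed at reproducing it. So there is no ``paper's own proof'' to align with; what matters is whether your reductions are sound and whether your remaining case is actually handled.

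Your first two blocks are fine as reductions. For $\dim Y\in(1,2]$, Theorem~\ref{main} with $n=2$, $k=1$ does give the bound, and you do not need the non-collinearity of $X$. For $\dim Y\in(0,1]$ and $\dim X>1$, Theorem~\ref{liu} with $k=1$ gives $\dim E\le 1$, and since $\dim(E\cup Y)\le 1<\dim X$ there is an $x\in X\setminus(E\cup Y)$ with $\dim\pi_x(Y)\ge\dim Y$; you should write $X\setminus(E\cup Y)$ rather than $X\setminus E$, but that is cosmetic.

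The genuine gap is your third block, which is in fact the entire content of Orponen--Shmerkin--Wang's Theorem~1.1. The paragraph you write there is not a proof but an opening move followed by ``this should deliver finite energy.'' Two concrete problems. First, the triple energy integral you write down does not, in general, converge at exponents up to $\min\{\dim X,\dim Y\}$; this is precisely the failure mode that forced OSW to replace the naive energy method with the ``thin tubes'' formalism and a multi-step bootstrap (their Sections~2--3), in which one only controls the energy of $\pi_x\nu$ restricted to a good set $G|_x$, not of $\pi_x\nu$ itself. Second, the tools proved in this paper do not fill the hole: Theorem~\ref{main_disc} and the bootstrap in Section~\ref{sec5} are all stated under $t=\dim Y\in(n-1,n]$, i.e.\ $\dim Y>1$ when $n=2$, exactly the range excluded in your third block; and Theorem~\ref{dualFurstEst} gives a lower bound on the dimension of a pencil of lines, not the incidence estimate between $\mu$-heavy $\delta$-balls and $\delta$-neighborhoods of chords of $Y$ that your sketch asks for. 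You would need to import, essentially verbatim, the proof of OSW's Theorem~1.1, at which point the case split is no longer a simplification. As written, the proposal proves the statement in the ranges where it is a corollary of other theorems and leaves the core case unproved.
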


\begin{theorem}[\cite{orponen2022kaufman}, Theorem 1.2]\label{OSW1.2}
Let $X,Y\subset \R^2$ be Borel sets with $X\neq \emptyset$ and $\dim Y >1$. Then,  
    \[
    \sup_{x\in X} \dim \pi_x(Y\setminus \{x\}) \geq \min \{\dim X+ \dim Y-1,1\}.
    \]
\end{theorem}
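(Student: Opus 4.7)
The plan is to deduce Theorem~\ref{OSW1.2} by combining the Bright--Gan exceptional set bound (Theorem~\ref{RnLundPhamThu}, specialized to $n=2$, $k=1$) with the Orponen--Shmerkin--Wang estimate Theorem~\ref{OSW1.1}, via a case split on whether $X$ essentially sits inside $Y$. Set $t := \min\{\dim X + \dim Y - 1, 1\}$. The regime $\dim X + \dim Y \leq 1$ is immediate, and the boundary case $\dim X = 0$ is settled by the slicing inequality
\[
\dim \pi_{x_0}(Y \setminus \{x_0\}) \geq \dim Y - 1 \geq t,
\]
which holds for every $x_0 \in \R^2$ via polar coordinates around $x_0$ together with the product bound $\dim((0,\infty) \times A) \leq 1 + \dim A$. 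So I assume $\dim X > 0$ and $\dim X + \dim Y > 1$, and argue by contradiction: suppose $\sup_{x \in X} \dim \pi_x(Y \setminus \{x\}) < s$ for some $0 < s < t$.

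The first case is $\dim(Y \setminus X) = \dim Y$. Setting $Y' := Y \setminus X$, we have $X \subseteq \R^2 \setminus Y'$, and since $Y' \subseteq Y \setminus \{x\}$ for every $x \in X$, it follows that $\dim \pi_x(Y') \leq \dim \pi_x(Y \setminus \{x\}) < s$. Applying Theorem~\ref{RnLundPhamThu} to the Borel set $Y'$ (whose dimension lies in $(1,2]$) yields
\[
\dim X \leq \max\{1 + s - \dim Y, 0\}.
\]
Since $\dim X > 0$ and $s < t \leq \dim X + \dim Y - 1$, both entries of the maximum are strictly less than $\dim X$, giving the contradiction $\dim X < \dim X$.

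The second case is $\dim(Y \setminus X) < \dim Y$, which forces $\dim(X \cap Y) = \dim Y > 1$. In particular $X \cap Y$ is a non-empty Borel set of dimension strictly greater than $1$ and hence not contained in any line, so Theorem~\ref{OSW1.1} applied to the pair $(X \cap Y, Y)$ gives
\[
\sup_{x \in X \cap Y} \dim \pi_x(Y \setminus \{x\}) \geq \min\{\dim(X \cap Y), \dim Y, 1\} = 1 \geq t,
\]
contradicting the assumption since $X \cap Y \subseteq X$.

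The main obstacle is this second case: the ``swap'' $Y \mapsto Y \setminus X$ fails to preserve dimension precisely when $X$ exhausts $Y$'s dimension, so Theorem~\ref{RnLundPhamThu} does not apply directly. Fortunately that regime forces $\dim(X \cap Y) > 1$, and since the target bound $t$ is at most $1$, the earlier Theorem~\ref{OSW1.1} (which does not require $\dim Y > 1$) supplies the needed estimate and closes the argument.
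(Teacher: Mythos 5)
Your proof is correct, and it follows a genuinely different route from the paper's full argument. The paper proves the higher-dimensional generalization (Theorem~\ref{main}) in two parallel ways: a ``direct proof'' in Section~\ref{sec2}, valid only under the hypothesis $X\cap Y=\emptyset$, which applies Theorem~\ref{RnLundPhamThu} to $Y$; and a much longer self-contained argument in Sections~\ref{sec3}--\ref{sec6} via new incidence and dual-Furstenberg estimates, which drops the disjointness hypothesis. Your Case~1 is essentially the paper's Section~\ref{sec2} argument, but you observe that the ``swap'' $Y\mapsto Y\setminus X$ makes the disjointness assumption free whenever $\dim(Y\setminus X)=\dim Y$. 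Your real contribution is noticing that the remaining situation forces $\dim(X\cap Y)=\dim Y>1$, whence $X\cap Y$ is not contained in a line, and Theorem~\ref{OSW1.1} supplies a stronger-than-needed lower bound of $1\ge t$. This resolves cleanly (and without circularity) the issue the paper flags in the remark after Section~\ref{sec2}, which suggests invoking Corollary~\ref{lundphamthu} --- but that corollary is derived from Theorem~\ref{main}, so using it here would be circular within the paper's own logical structure. What you lose relative to the paper is self-containedness (you take OSW's Theorem~1.1 as a black box, whereas Sections~\ref{sec3}--\ref{sec6} prove everything from incidence estimates the authors view as being of independent interest) and direct generality ($\T$heorem~\ref{OSW1.1} is stated only in $\R^2$, though your strategy would in fact extend to $\R^n$ by substituting Ren's generalization of it cited in the paper). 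Two minor checks worth making explicit: the boundary case $\dim X=0$ needs $s\in(0,1)$ to feed Theorem~\ref{RnLundPhamThu}, which is fine since $t\le 1$ and you can pick $s\in(\sup,t)$; and the slicing bound you use there (polar coordinates plus $\dim((0,\infty)\times A)\le 1+\dim A$) is valid because one factor is an interval, so the product estimate only requires box dimension for the interval, not for $A$.
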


Theorem \ref{OSW1.1} generalized to higher dimensions for Borel subsets $Y\subset \R^n$ with $\dim Y>k-1/k- \eta$ for $\eta>0$ sufficiently small. This has been strengthened to all $Y$ with $\dim Y>k-1$ (by the third author in \cite{ren2023discretized}), as was conjectured by Orponen, Shmerkin, and Wang. On the other hand, Theorem \ref{OSW1.2} was not generalized to higher dimensions. This generalization to $\R^n$ is the focus of this paper.

\subsection{Notation and Preliminaries}

The notation $B(x,r)$ denotes a closed ball of radius $r>0$ with center $x\in X$ in a metric space $(X,d)$. If $X = \R^n$, we let $B^n = B(0,1)$. If $A\subset X$ is a bounded set, and $r>0$, we let $|A|_r$ denote the $r$-covering number of $A$, i.e. the minimum number of closed balls of radius $r$ required to cover $A$. We denote the cardinality of a set $|A|$.

Given $X,Y$ are positive numbers, then $X\lesssim Y$ means that $X\leq C Y$ for some constant $C$, while $X\gtrsim Y$ and $X\sim Y$ stand for $Y\lesssim X$ and $X\lesssim Y \lesssim X$ respectively.

If $\mu$ is a positive finite measure on $\R^n$ and $t\geq 0$, the $t$-energy of the measure $\mu$ is defined via the equation 
\[
I_t(\mu) = \iint |x-y|^{-t} d\mu(x) d\mu(y) \in (0,\infty].
\]
Frostman's Lemma states that if $X\subset \R^n$ is a Borel set and $\dim X >t\geq 0$, then there exists a Borel probability measure $\mu$ with $\spt(\mu) \subset X$ and $I_t(\mu)<\infty$. See \cite[Chapter 8]{mattila_1995} for more details.

Lastly, we define $(\de,s)$-sets and Katz-Tao $(\de,s)$-sets.

\begin{definition}[$(\de, s, C)$-sets] Let $(X,d)$ be a metric space, let $P\subset X$ be a set, and let $\de, C>0$, $s\geq 0$. Then, we say that $P$ is a $\mathbf{(\de, s, C)}$\textbf{-set} if 
\[
|P\cap B(x,r)|_\de \leq C r^s |P|_\de
\]
for all $x\in X$ and all $r\geq \de$.    
\end{definition}

\begin{definition}[Katz-Tao $(\de, s,C)$-sets] Let $(X,d)$ be a metric space. We say that a $\de$-separated set $P\subset X$ is a \textbf{Katz-Tao} $\mathbf{(\de,s,C)}$-\textbf{set} if 
\[
|P \cap B(x,r)| \leq C\left(\frac{r}{\de}\right)^{s}
\]
for all $x\in X$ and all $r\geq \de$.
\end{definition}

\bigskip
\begin{sloppypar}
\noindent {\bf Acknowledgements.} A number of ideas for this paper were conceived while the first author was working on the Study Guide Writing Workshop 2023 at UPenn. She would like to thank her collaborators on the study guide: Ryan Bushling, Caleb Marshall, Alex Ortiz, and their mentor Josh Zahl for insightful discussions. The third author is supported by a NSF GRFP fellowship and a grant from the AIM research community on Fourier restriction.
\end{sloppypar}

\section{A Direct Proof of Theorem \ref{main} when $X\cap Y = \emptyset$}\label{sec2}

In this direct proof, we use Theorem \ref{RnLundPhamThu}.

\begin{proof}[Proof of Theorem \ref{main}, $X\cap Y = \emptyset$]
Suppose that $X,Y\subset \R^n$ are Borel sets with  $\dim Y\in (k,k+1]$, $X \neq \emptyset$, and $X\cap Y = \emptyset$. If $\dim X = 0$, then we wish to prove that
\begin{equation*}
    \sup_{x \in X} \dim (\pi_x (Y \setminus \{ x \})) \ge \dim Y - k.
\end{equation*}
In fact, we will show that $\dim (\pi_x (Y \setminus \{ x \})) \ge \dim Y - 1$ for any $x \in X$. By Frostman's lemma, for any $s < \dim Y$, there exists a $s$-dimensional Frostman probability measure $\mu$ supported on $Y$. Now for any ball $I$ of diameter $r$ on the unit sphere $S^{n-1}$, we have that $T := \pi_x^{-1} (I) \cap B(0, 1)$ is contained in $r^{-1}$ many $r$-balls, so $\mu(T) \lesim r^{s-1}$ by the Frostman condition. Hence, the pushforward measure $\pi_x \mu$ satisfies a $(s-1)$-dimensional Frostman condition, so by Frostman's lemma again, we have that $\dim (\pi_x (Y)) \ge s-1$. Since $s < \dim Y$ was arbitrary, we have $\dim (\pi_x (Y)) \ge \dim Y - 1$, as desired.

Thus, assume $\dim X > 0$. Let $$s = \min \{\dim X + \dim Y - k,k\}.$$ Let $\epsilon>0$ be small enough such that $s - \epsilon > 0$ and $\dim X - \epsilon>0$, which we may assume as $\dim Y>k$ and $\dim X>0$. Under these assumptions on $\epsilon$, we may apply Theorem \ref{RnLundPhamThu} to $s-\epsilon$, we obtain 
    \[
        \dim (\{x \in \R^n \setminus Y : \dim \pi_x(Y) < s-\epsilon\}) \leq \max\{k+ s-\epsilon - \dim Y, 0\} < \dim X.
    \]
    Given $\dim (\{x \in \R^n \setminus Y : \dim \pi_x(Y) < s-\epsilon\})$ is strictly less than $\dim X$, it follows that $X\not\subseteq \{x\in \R^n \setminus Y : \dim \pi_x(Y) < s-\epsilon\}$. Given $X\cap Y = \emptyset$, this implies that for every $\epsilon>0$ sufficiently small, there exists an $x\in X$ such that 
    \[
    \dim \pi_x(Y\setminus \{x\}) \geq s-\epsilon \    = \min \{\dim X + \dim Y - k, k\} - \epsilon.
   \]
    Given $\epsilon$ was arbitrary, we obtain that 
    \[
    \sup_{x\in X} \dim \pi_x(Y\setminus \{x\}) \geq \min \{\dim X + \dim Y - k,k\}.
    \]
\end{proof}


\begin{remark}
    \rm{If one uses Corollary \ref{lundphamthu} instead of Theorem \ref{RnLundPhamThu} in this argument, the assumption that $X\cap Y = \emptyset$ may be dropped.} 
\end{remark}

\section{An Incidence Estimate}\label{sec3}

In the second proof of our main theorem, we first begin by generalizing Fu and Ren's planar incidence estimate to $\R^n$ for a Katz-Tao $(\de,\alpha)$-set of points and a Katz-Tao $(\de,\beta)$-set of tubes.

\begin{definition}
    For a set $A$, let $A^{(r)}$ denote the $r$-neighborhood of $A$. If $P$ is a set of $\delta$-balls and $\T$ is a set of $\delta$-tubes, let $P^r := \{ p^{(r)} : p \in P \}$ and $\T^r := \{ t^{(r)} : t \in \T \}$.
\end{definition}

\begin{prop}\label{prop:2.1}
Let $1 \le S \le D$, and fix $\eps > 0$. Let $P$ be a set of $1$-balls and $\T$ be a set of $1 \times D$-tubes in $[0, D]^n$. For $1 \le r \le S$, let $M_P (r) = \sup_{B_r} |P \cap B_r|$ and $M_\T (r) = \sup_{T_r} |\T \cap T_r|$. Then
\begin{equation}\label{eqn:2.1}
  \hspace{-.5cm}  I(P, \T) \lesim_\eps \left( \sum_{1 \le r \le S} r^{-3(n-1)/2} M_P (r)^{1/2} M_\T (r)^{1/2} \right) D^{(n-1)/2} |P|^{1/2} |\T|^{1/2} + S^{-1} I(P^{SD^\eps}, \T^{SD^\eps}).
\end{equation}
\end{prop}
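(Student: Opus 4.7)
The plan is to extend the Fu--Ren multi-scale incidence estimate \cite{fu2022incidence} from $\R^2$ to $\R^n$, retaining their Cauchy--Schwarz plus dyadic scale decomposition skeleton. The starting point is the dual Cauchy--Schwarz bound
\[
I(P, \T)^2 \lesim |P| \sum_{(t, t') \in \T \times \T} \#\{p \in P : p \text{ incident to both } t \text{ and } t'\},
\]
obtained by squaring $I(P, \T) = \sum_p \#\{t \in \T : p \text{ incident to } t\}$, which reduces the task to counting pairs of tubes weighted by the size of their common incidence set with $P$.

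The next step is to partition the pairs $(t,t')$ dyadically by the diameter $r \in \{1,2,4,\ldots,S\}$ of their intersection region (equivalently, by the inverse of their angular separation). At scale $r$, the intersection $t \cap t'$ has diameter $\sim r$, so is incident to at most $M_P(r)$ points of $P$; and for fixed $t$, the tubes $t'$ whose intersection with $t$ has diameter $\sim r$ are confined to an $O(r) \times D$-prism around $t$ and hence number at most $M_\T(r)$. Summing over $t$, taking square roots, and combining the $(n-1)$-dimensional cross-section of a tube with the $(n-1)$-parameter angular volume $\sim r^{-(n-1)}$ of nearly-parallel directions produces the prefactor $r^{-3(n-1)/2} M_P(r)^{1/2} M_\T(r)^{1/2} D^{(n-1)/2} |P|^{1/2} |\T|^{1/2}$. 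Pairs $(t,t')$ with intersection diameter exceeding $S$ are collapsed into incidences between the coarsened objects $P^{SD^\eps}$ and $\T^{SD^\eps}$, yielding the residual term $S^{-1} I(P^{SD^\eps}, \T^{SD^\eps})$, where the $S^{-1}$ factor reflects the natural inflation ratio between original and coarsened incidence counts at scale $S$, and the $D^\eps$ absorbs the $O(\log S)$ loss from the dyadic pigeonhole over dyadic scales.

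The principal difficulty in passing from $n=2$ to general $n$ is the geometry of nearly-parallel tubes: the angular offset is now an $(n-1)$-parameter quantity, and confining a tube $t'$ near a fixed $t$ at scale $r$ demands simultaneous control of angular separation and positional drift along length $D$. Matching the exponent $r^{-3(n-1)/2}$ precisely requires careful accounting of the $(n-1)$-parameter angular window, the $(n-1)$-dimensional cross-section of the $r \times D$ tube, and the coarsening notion $M_\T(r)$; this is where the direct translation of Fu--Ren's $\R^2$ argument needs the most care.
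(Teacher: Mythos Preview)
Your proposal diverges substantially from the paper's argument and, as written, does not go through.

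The paper---like Fu--Ren in $\R^2$---does \emph{not} begin with the physical-space Cauchy--Schwarz $I(P,\T)^2 \le |P| \sum_{t,t'} |P \cap t \cap t'|$. It uses the high--low method on the Fourier side: write $I(P,\T) \sim \int \hat f \,\hat g$ for $f = \sum_p \chi_p$, $g = \sum_t \chi_t$, decompose the unit frequency ball into dyadic annuli $\{|\xi| \sim r^{-1}\}$ for $1 \le r \le S$ plus a low-frequency ball $\{|\xi| \lesssim S^{-1}\}$, and apply Cauchy--Schwarz \emph{on each annulus separately}. The low-frequency piece produces the additive error term directly at the level of $I$, not $I^2$. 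On each high-frequency annulus the tube side is handled by sorting tubes into $r/D$-direction caps, exploiting that their Fourier supports on the annulus overlap only $(D/r)^{n-2}$-fold, and then bounding parallel tubes; this yields $D^{n-1} r^{-(2n-3)} M_\T(r) |\T|$, which combines with the point-side factor $r^{-n} M_P(r) |P|$ to give $r^{-3(n-1)/2} D^{(n-1)/2}$ after the square root.

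Your combinatorial version has two concrete gaps. First, the geometric claim that tubes $t'$ with intersection diameter $\sim r$ (equivalently, angle $\sim 1/r$ from $t$) are confined to an $O(r) \times D$ prism around $t$ is false: such a $t'$ drifts $\sim D/r$ away over length $D$, so it lies only in a $(D/r)$-prism. The quantity $M_\T(r)$ counts tubes inside an $r \times D$ prism, which forces angle $\lesssim r/D$---a completely different regime---so the bound ``at most $M_\T(r)$'' is unjustified and the exponent accounting that follows has no support. Second, once you open with the squared inequality $I^2 \le |P| \sum_{t,t'}(\cdots)$, the near-parallel contribution lives inside that square; you cannot peel it off as a linear additive term $S^{-1} I(P^{SD^\eps}, \T^{SD^\eps})$. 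The Fourier-side decomposition is precisely what makes the error term additive in $I$ rather than $I^2$, and simultaneously supplies the $L^2$-orthogonality between direction caps needed for the correct $r$- and $D$-exponents.
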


Before presenting the proof, we construct a special bump-like function.

\begin{lemma}\label{lem:bump}
    There exists a function $\chi_0$ on $\R^n$ and a real number $c_n > 0$ with the following properties:
    \begin{itemize}
        \item $\chi_0 \ge 0$ on $\R^n$ and $\chi_0 (x) \ge c_d$ whenever $|x| \le 1$;

        \item $\hchi_0$ is supported in $B(0, 1)$.
    \end{itemize}
\end{lemma}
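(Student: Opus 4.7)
The plan is to construct $\chi_0$ as the square of the Fourier transform of a suitable bump, exploiting the basic identity $\widehat{|\psi|^2} = \hat\psi \ast \overline{\hat\psi(-\cdot)}$, which turns pointwise positivity into compact spectral support.

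First, I would fix a small parameter $\eta \in (0,1/2)$ (to be chosen) and pick a smooth, nonnegative, even bump function $\phi$ on $\R^n$ with $\supp \phi \subset B(0,\eta)$ and $\int \phi = 1$. Define
\[
\psi(x) := \hat\phi(x) = \int_{\R^n} \phi(\xi) e^{-2\pi i x \cdot \xi}\, d\xi.
\]
Because $\phi$ is real and even, $\psi$ is real-valued, and $\psi(0) = \int \phi = 1$. The next step is a continuity estimate: for $|x| \le 1$ and $|\xi| \le \eta$ we have $|e^{-2\pi i x\cdot\xi} - 1| \le 2\pi\eta$, so
\[
|\psi(x) - 1| \le \int_{B(0,\eta)} \phi(\xi)\,|e^{-2\pi i x\cdot\xi} - 1|\,d\xi \le 2\pi\eta.
\]
Choosing $\eta$ small enough (say $\eta < 1/(4\pi)$) forces $\psi(x) \ge 1/2$ for all $|x| \le 1$.

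Now set $\chi_0 := \psi^2$. Then $\chi_0 \ge 0$ everywhere, and $\chi_0(x) \ge 1/4 =: c_n$ whenever $|x| \le 1$, giving the first bullet. For the spectral support, I would use that $\hat\psi = \widehat{\hat\phi} = \phi(-\cdot) = \phi$ (again by evenness), so
\[
\hat{\chi_0} = \widehat{\psi \cdot \psi} = \hat\psi \ast \hat\psi = \phi \ast \phi,
\]
and convolution of two functions supported in $B(0,\eta)$ is supported in $B(0,2\eta) \subset B(0,1)$.

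There is no real obstacle here; the only thing to watch is that one cannot simply take $\psi$ equal to a bump because then $\psi$ would have large Fourier support. The key insight is that squaring a Fourier transform of a bump doubles the support radius in frequency while preserving positivity, so choosing the input bump to have frequency-support radius less than $1/2$ yields the desired $B(0,1)$ support on the Fourier side, while concentrating the input bump enough guarantees that $\psi$ is close to its value at the origin throughout $B(0,1)$.
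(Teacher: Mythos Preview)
Your proof is correct and follows essentially the same approach as the paper: both take $\chi_0$ to be the squared modulus of the Fourier transform of a bump with small support, obtaining nonnegativity from the square, the lower bound on $B(0,1)$ from the fact that $e^{-2\pi i x\cdot\xi}$ is close to $1$ when $|x\cdot\xi|$ is small, and the compact Fourier support from convolution. The only cosmetic difference is that you impose evenness on $\phi$ to make $\psi$ real and then square it, whereas the paper writes $\chi_0 = |\hat h|^2$ directly without assuming evenness; the two constructions are equivalent.
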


\begin{proof}
    Let $c = \frac{1}{100}$. Let $h$ be a non-negative $C^\infty$ bump function supported in $B(0, 2c)$ that takes the value $1$ on $B(0, c)$. Then $p(\xi) = h(\xi) * \overline{h(-\xi)}$ is a non-negative $C^\infty$ bump function supported in $B(0, 4c)$. Let $\chi_0 = \hat{p}$. We check the two properties:
    \begin{itemize}
        \item $\chi_0 (x) = \hath (x) \overline{\hath} (x) = |\hath (x)|^2 \ge 0$.

        \item If $|x| \le 1$, then $\Re(e^{-2\pi i x \cdot \xi}) > \frac{1}{2}$ for $|\xi| < 2c$. Hence, $\hath(x) \ge \frac{1}{2} c^n$.

        \item $\hchi_0 (\xi) = p(-\xi)$ is supported in $B(0, 2c) \subset B(0, 1)$.
    \end{itemize}
\end{proof}

For a $r$-tube $T$, the dual plate $T^*$ is the $r^{-1}$-plate orthogonal to $T$ through the origin. Also, let $d'(x, T) = d(\sigma(x), 0)$, where $\sigma$ is the affine map sending $T$ to the unit ball. Equivalently, for $x \notin 2T$, we have $d'(x, T) = \inf\{ s \ge 1: x \in sT \}$, where $sT$ is the dilate of $T$ around the center of $T$.

\begin{cor}\label{cor:char}
    For a $1 \times D$-tube $T$, there exists a non-negative function $\chi_T$ that is $1$ on $T$, has Fourier support on the dual plate $T^*$, and has rapid decay outside $t$: $\chi_t (x) \le \RapDec(1+d'(x, T))$.
\end{cor}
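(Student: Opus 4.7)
The plan is to obtain $\chi_T$ by affinely rescaling $\chi_0$ from Lemma \ref{lem:bump} so that $B(0,1)$ pulls back to $T$. Concretely, let $\sigma$ be the affine map sending $T$ to the unit ball (already introduced in the paragraph preceding the corollary), so that $\sigma(x) = L^{-1}(x - c_T)$ where $c_T$ is the center of $T$ and $L$ is the composition of a rotation aligning the tube axis with $e_1$ and the diagonal scaling $\operatorname{diag}(D,1,\ldots,1)$. I would then define
\[
\chi_T(x) := c_n^{-1}\, \chi_0(\sigma(x)).
\]

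Nonnegativity is inherited from $\chi_0 \ge 0$, and the lower bound $\chi_T \ge 1$ on $T$ follows immediately from $|\sigma(x)| \le 1$ for $x \in T$ combined with the first property of $\chi_0$ in Lemma \ref{lem:bump}. The rapid-decay claim is also essentially automatic: since $\chi_0 = \hat{p}$ with $p \in C_c^\infty$, $\chi_0$ is Schwartz and hence $|\chi_0(y)| \lesim_N (1+|y|)^{-N}$ for every $N$; because $d'(x, T) = |\sigma(x)|$ by the definition of $d'$, composing with $\sigma$ transfers this bound directly to $\chi_T(x) \le \RapDec(1+d'(x,T))$.

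The one step with actual content is the Fourier-support claim. By the standard affine change-of-variables rule,
\[
\hat{\chi}_T(\xi) = c_n^{-1}\, |\det L|\, e^{-2\pi i c_T \cdot \xi}\, \hchi_0(L^T \xi),
\]
so $\operatorname{supp}\hat{\chi}_T \subset \{\xi : L^T \xi \in B(0,1)\}$. The mild obstacle is then to identify this sublevel set with the dual plate $T^*$. Since $L^T$ stretches by factor $D$ in the tube direction and by $1$ in the orthogonal directions, its inverse image of the unit ball is exactly the $(1/D) \times 1 \times \cdots \times 1$ plate through the origin orthogonal to the tube axis, which is $T^*$ by definition. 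Everything after this is routine bookkeeping of affine maps and their adjoints.
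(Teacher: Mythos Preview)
Your proof is correct and is precisely the argument the paper has in mind: the corollary is stated without proof as an immediate consequence of Lemma~\ref{lem:bump}, and the only thing to do is pull $\chi_0$ back by the affine map $\sigma$ and read off the Fourier support via the change-of-variables formula. One cosmetic remark: the statement says $\chi_T$ ``is $1$ on $T$'' whereas your construction gives $\chi_T \ge 1$ there; this is a harmless abuse (and is all that is needed in the subsequent incidence argument), so your $c_n^{-1}\chi_0 \circ \sigma$ is exactly right.
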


\begin{proof}[Proof of Proposition \ref{prop:2.1}]
    Cover $B_1$ by dyadic annuli $A_r = \{ \xi : |\xi| \in (\frac{r^{-1}}{2}, r^{-1}) \}$ for $1 \le r \le S$ and $A_S = \{ \xi : |\xi| \le S^{-1} \}$. Construct a smooth partition
    \begin{equation}\label{eqn:partition of unity}
        \chi_{B_1} = \sum_{S^{-1} < r < 1 \text{ dyadic}} \psi_r^2 + \psi_S^2.
    \end{equation}
    We may assume the $\psi_r$ are supported on $A_r$.

    Let $f = \sum_{p \in P} \chi_p$ and $g = \sum_{t \in \T} \chi_t$. Rewrite \eqref{eqn:partition of unity} as
    \begin{equation}\label{eqn:sum}
        \int_{\R^n} \chi_{B_1} \hatf\hatg \,d\xi = \int_{\R^n} \sum_{S^{-1} < r < 1 \text{ dyadic}} \psi_r^2 \hatf\hatg + \psi_S \hatf\hatg \,d\xi.
    \end{equation}
    By Plancherel, the left hand side of \eqref{eqn:sum} is $\int_{\R^n} (f * \cchi) g \,dx \sim I(P, \T)$.

    The rightmost term on the right hand side of \eqref{eqn:sum} is, by Plancherel,
    \begin{equation*}
        \int_{\R^n} (f * \cpsi_S)(g * \cpsi_S) \,dx = \sum_{p \in P} \sum_{t \in \T} \int_{\R^n} (\chi_p * \chi_S) \chi_t \,dx.
    \end{equation*}
    Note that $\chi_p * \chi_S (x) \lesim S^{-n} \RapDec(1+\frac{d(x,p)}{S})$ and $\chi_t (x) \lesim \RapDec(1+d(x,t))$. Hence, we have that $p \in P$ and $t \in \T$ contribute only if $d'(p, t) \le SD^{\eps}$, because otherwise the rapid decay destroys the contribution. When $d'(p, t) \le SD^\eps$, we get $S^{1-n}$. Finally, because $P, \T \subset [0, D]^n$, we have that $d'(p, t) \le SD^{\eps}$ implies $p^{SD^\eps} \cap T^{SD^\eps} \neq \emptyset$.

    Now we turn to the leftmost term on the right hand side of \eqref{eqn:sum}. For a given $r$, by Cauchy-Schwarz and Plancherel,
    \begin{equation}\label{eqn:c-s}
        \int_{\R^n} \psi_r^2 \hatf\hatg \, dx \le \left( \int_{\R^n} |\cpsi_r * f|^2 \, dx \right)^{1/2} \left( \int_{\R^n} |\cpsi_r * g|^2 \, dx \right)^{1/2}.
    \end{equation}
    For the first term in the product on the right hand side of \eqref{eqn:c-s}, we note that $\norm{\cpsi_r * f}_{L^\infty} \le r^{-n} M_P (r)$. Indeed, $|\cpsi_r * \chi_p (x)| \lesim r^{-n} \RapDec(1 + \frac{d(x, p)}{r})$. Since $M_P (s) \lesim \left( \frac{s}{r} \right)^n M_P (r)$, we have
    \begin{equation*}
        |\cpsi_r * f(x)| \lesim \sum_{s \ge r \text{ dyadic}} r^{-n} \RapDec\left(1 + \frac{d(x, p)}{s}\right) \left( \frac{s}{r} \right)^n M_P (r) \lesim r^{-n} M_P (r).
    \end{equation*}
    Also, by Young's inequality and the fact that $|\cpsi_r (x)| \le r^{-n} \RapDec(1 + \frac{|x|}{r})$ for any $x \in \R^n$ (so it has bounded $L^1$-norm),
    \begin{equation*}
        \norm{\cpsi_r * f}_{L^1} = \norm{f}_{L^1} \norm{\cpsi_r}_{L^1} \lesim |P|.
    \end{equation*}
    Thus,
    \begin{equation}\label{eqn:bound_f}
        \int_{\R^n} |\cpsi_r * f|^2 \, dx \le \norm{\cpsi_r * f}_{L^1} \norm{\cpsi_r * f}_{L^\infty} \le |P| r^{-n} M_P (r).
    \end{equation}

    For the second term in the product on the right hand side of \eqref{eqn:c-s}, we divide $\mathbb{S}^{n-1}$ into $\delta r$-caps $\theta$. For each $\theta$, let $\T_\theta$ be the tubes in $\T$ whose direction lies in the cap $\theta$. If a tube lies in multiple caps, choose one arbitrarily, so to ensure that $\T$ is the disjoint union of $\T_\theta$. If $g_\theta = \sum_{T \in \T_\theta} \chi_T$, then $g = \sum_\theta g_\theta$.

    \begin{lemma}\label{lem:angles}
        For each $1 \le r \le S$,
        \begin{equation*}
            \int_{A_r} |\sum_\theta \hatg_\theta|^2 \, d\xi \lesim \left( \frac{D}{r} \right)^{n-2} \cdot \sum_\theta \int_{A_r} |\hatg_\theta|^2
        \end{equation*}
    \end{lemma}
    
    \begin{proof}
        For any $|\xi| \sim r^{-1}$, it lies in $\lesim \left( \frac{D}{r} \right)^{n-2}$ many $r\delta$-plates through the origin. Now use Cauchy-Schwarz and the fact that $g_\theta$ is supported in some $r\delta$-plate.
    \end{proof}

    \begin{lemma}\label{lem:parallel}
        For each $r\delta$-cap $\theta$,
        \begin{equation*}
            \int |g_\theta * \cpsi_r|^2 \lesim r^{-(n-1)} M_\T (r) \cdot |\T_\theta| D.
        \end{equation*}
    \end{lemma}

    \begin{proof}
        We will show that $\norm{g_\theta * \cpsi_r}_{L^\infty} \lesim r^{-(n-1)} M_\T (r)$ and $\norm{g_\theta * \cpsi_r}_{L^1} \lesim |\T_\theta| D$.

        The second claim follows from triangle inequality and $$\norm{\chi_T * \cpsi_r}_{L^1} \le \norm{\chi_T}_{L^1} \norm{\cpsi_r}_{L^1} \lesim \norm{\chi_T}_{L^1} \lesim D.$$
        
        Let's turn to the first claim. Note that $\chi_T * \cpsi_r$ is essentially supported on a $r \times D$-tube with maximum amplitude $\norm{\chi_T * \cpsi_r}_\infty \lesim r^{-(n-1)}$ and rapid decay outside the support: for $y \in \R^n$, we have $\chi_T * \cpsi_r(x) \le r^{-(n-1)} \RapDec(1+\frac{d'(y, T)}{r})$. Fix $x \in \R^n$, and let $\bT$ be an $r$-tube parallel to $\theta$ centered at $x$. For any $s \ge r$, we can cover $S = \{ y \in \R^n : d'(y, \bT) \le s \} \cap B(0, 1)$ with $\lesim 1$ many $s$-tubes. Thus, there are $M_\T (s) \le M_\T (r) \cdot \left( \frac{s}{r} \right)^{2(n-1)}$ many tubes in $S$, and each such tube has  $|\chi_T * \cpsi_r (x)| \lesim r^{-(n-1)} \RapDec(1+\frac{s}{r})$.

        Now sum over dyadic $s \ge r$ to get
        \begin{equation*}
            |g_\theta * \cpsi_r (x)| \lesim \sum_{s \ge r \text{ dyadic}} r^{-(n-1)} \RapDec\left(1+\frac{s}{r}\right) M_\T (r) \cdot \left( \frac{s}{r} \right)^{2(n-1)} \lesim r^{-(n-1)} M_T (r).
        \end{equation*}
        This proves the first claim.
    \end{proof}

    Combining Lemmas \ref{lem:angles}, \ref{lem:parallel} and using Plancherel, we have
    \begin{equation}\label{eqn:bound_g}
        \int_{\R^n} |\cpsi_r * g|^2 \, dx \lesim D^{n-1} r^{-(2n-3)} M_\T (r) |\T|
    \end{equation}
    Now substitute \eqref{eqn:bound_f} and \eqref{eqn:bound_g} into \eqref{eqn:c-s} to finish.
\end{proof}

\begin{theorem}\label{incidence}
Let $\alpha\in [0,n]$ and $\beta\in [0,2(n-1)]$.
Let $\kappa = \min\left\{\frac{n-1}{\alpha + \beta -n+1}, \frac{1}{2}\right\}$. There exists $C > 0$ such that the following holds: for any Katz-Tao $(\delta, \alpha, K_\alpha)$-set of balls $P$ and Katz-Tao $(\delta, \beta, K_\beta)$-set of tubes $\T$, we have the following bound if $\alpha + \beta \neq 3(n-1)$:
\begin{equation*}
    I(P, \T) \le C \delta^{-(n-1)\kappa} (K_\alpha K_\beta)^\kappa |P|^{1-\kappa} |\T|^{1-\kappa}.
\end{equation*}
If $\alpha + \beta = 3(n-1)$, i.e. $\kappa = \frac{1}{2}$, we have
\begin{equation*}
    I(P, \T) \le C \log (\delta^{-1}) \delta^{-(n-1)/2} (K_\alpha K_\beta)^{1/2} |P|^{1/2} |\T|^{1/2}.
\end{equation*}
\end{theorem}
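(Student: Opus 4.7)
The plan is to apply Proposition~\ref{prop:2.1} with a parameter $S$ to be optimized, after rescaling so that balls have unit radius in $[0,D]^n$ (with $D = \delta^{-1}$) and tubes are $1\times D$. The Katz-Tao hypotheses translate to $M_P(r) \le K_\alpha r^\alpha$ and $M_\T(r) \le K_\beta r^\beta$ for $1 \le r \le D$, so the sum in the first term of Proposition~\ref{prop:2.1} becomes a dyadic geometric series $\sum_{1\le r\le S} r^{\gamma}$ with $\gamma := (\alpha+\beta-3(n-1))/2$. This sum is bounded by $O(1)$ when $\gamma < 0$, by $O(\log S)$ when $\gamma = 0$, and by $O(S^\gamma)$ when $\gamma > 0$, so the first term is controlled by
\[
  A(S) \lesim_\eps D^{(n-1)/2}(K_\alpha K_\beta)^{1/2}(|P||\T|)^{1/2}\cdot S^{\max(\gamma,0)}
\]
(with a logarithmic factor at $\gamma = 0$).

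For the error term $S^{-1}I(P^{SD^\eps},\T^{SD^\eps})$ the plan is to combine two Katz-Tao estimates on the fattened configurations: each $SD^\eps$-fattened tube contains at most $\lesim K_\alpha D(SD^\eps)^{\alpha-1}$ balls of $P$ (cover the fattened tube by $D/(SD^\eps)$ balls of radius $SD^\eps$ and apply the point Katz-Tao bound on each), and each $SD^\eps$-fattened ball meets at most $K_\beta(SD^\eps)^\beta$ tubes of $\T$ by the tube Katz-Tao bound. Taking the geometric mean of these two inequalities yields
\[
  S^{-1}I(P^{SD^\eps},\T^{SD^\eps}) \lesim_\eps D^{1/2}(K_\alpha K_\beta)^{1/2}(|P||\T|)^{1/2}\cdot S^{(\alpha+\beta-3)/2}.
\]
In the regime $\alpha+\beta\le 3(n-1)$ (so $\kappa=1/2$ and $\gamma\le 0$), selecting $S = \max\{2,\,D^{(n-2)/(\alpha+\beta-3)}\}$ equates the two terms at the common value $(K_\alpha K_\beta)^{1/2}D^{(n-1)/2}(|P||\T|)^{1/2}$, which is precisely the desired $\kappa=1/2$ bound; the logarithmic loss at the threshold $\alpha+\beta=3(n-1)$ accounts for the $\log(\de^{-1})$ factor in the second case of the theorem statement.

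In the regime $\alpha+\beta>3(n-1)$ (so $\kappa=(n-1)/(\alpha+\beta-n+1)<1/2$) the first-term sum grows like $S^\gamma$ and the naive single-scale balance no longer delivers the optimal exponent. The plan is to either iterate Proposition~\ref{prop:2.1} at successively coarser scales, gaining a factor of $S^{-1}$ per iteration while tracking how the Katz-Tao constants $K_\alpha,K_\beta$ and the working scale $D$ evolve under fattening, or to interpolate the $\kappa=1/2$-type estimate (applied at a coarser scale) with the trivial bound $I \le |P||\T|$, using $|P|\le K_\alpha D^\alpha$ and $|\T|\le K_\beta D^\beta$ to extract the sharp $|P|^{1-\kappa}|\T|^{1-\kappa}$ dependence. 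The hardest part will be closing this argument in dimension $n>2$, where a direct induction via Proposition~\ref{prop:2.1} leaves a residual factor of $S^{n-2}$ in the error that cannot be absorbed by $S^{-1}$; I expect to handle this through an $\eps$-stealing induction, proving the auxiliary bound with a small $D^\tau$ slack for arbitrary $\tau>0$ and then optimizing $\tau$ and $S$ against the per-iteration $D^{O(\eps)}$ losses coming from the $\lesim_\eps$ constants in Proposition~\ref{prop:2.1}.
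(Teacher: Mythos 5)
There are two genuine problems with your approach. First, the bound ``each $SD^\eps$-fattened ball meets at most $K_\beta(SD^\eps)^\beta$ tubes of $\T$'' is not a consequence of the Katz--Tao condition on $\T$. That condition controls $|\T\cap T_r|$ for a \emph{fat tube} $T_r$, i.e.\ for a metric ball in $\mathcal A(n,1)$, which fixes both position and direction up to error $r$. The set of all tubes incident to a Euclidean ball $B(x,SD^\eps)$ allows arbitrary direction, so covering it by fat tubes requires roughly $(D^{1-\eps}/S)^{n-1}$ of them (one per angular cap of size $SD^{\eps}/D$), and the correct count is larger than yours by that factor. Since $I(P^{SD^\eps},\T^{SD^\eps})$ enters as an upper bound, using a too-small estimate for it invalidates the inequality. (Relatedly, you take the error term in Proposition~\ref{prop:2.1} to be $S^{-1}I(P^{SD^\eps},\T^{SD^\eps})$, which is what the proposition literally says, but the proposition's own proof produces $S^{1-n}I(P^{SD^\eps},\T^{SD^\eps})$, and that exponent is what the paper's application actually uses; with $S^{-1}$ your choice $S=\max\{2,D^{(n-2)/(\alpha+\beta-3)}\}$ does not close for $n>2$ and $3<\alpha+\beta<n+1$.)

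Second, and more importantly, the supercritical case $\alpha+\beta>3(n-1)$ is where the content of the theorem lies, and your plan for it (iterating the proposition, or an $\eps$-stealing induction) is speculative. The paper's argument here is substantially simpler than what you are contemplating and avoids both issues above: it never tries to beat the trivial bound $I(P^{SD^\eps},\T^{SD^\eps})\le|P||\T|$. The entire gain comes from choosing $S$ as a function of the \emph{actual} cardinalities, namely
\[
 S=\max\Bigl\{1,\Bigl(\tfrac{|P||\T|}{D^{n-1}K_\alpha K_\beta}\Bigr)^{\kappa/(n-1)}\Bigr\},
\]
rather than as a power of $D$ determined by $\alpha,\beta$ alone. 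One checks $S\le D$, the error term $S^{1-n}|P||\T|$ equals $D^{(n-1)\kappa}(K_\alpha K_\beta)^\kappa|P|^{1-\kappa}|\T|^{1-\kappa}$ by design, and an exponent computation shows the geometric-sum term $D^{(n-1)/2}(K_\alpha K_\beta)^{1/2}(|P||\T|)^{1/2}S^{(\alpha+\beta-3(n-1))/2}$ collapses to the same quantity using the identity $\tfrac12-\kappa=\tfrac{\kappa}{n-1}\cdot\tfrac{\alpha+\beta-3(n-1)}{2}$. No iteration, interpolation, or $\eps$-loss bookkeeping is needed. Your Katz--Tao-refined error estimate (even once corrected) monotonically increases in $S$ in this regime, so no single-scale balance can work with it; the trivial bound, which is independent of $S$, is exactly what makes the balance possible.
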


\begin{remark}
\rm{Note, that in either case, we have that 
\[
I(P,\T) \lesssim \de^{-\epsilon} \de^{-(n-1) \kappa} (K_\alpha K_\beta)^{\kappa} |P|^{1-\kappa} |\T|^{1-\kappa}.
\]
In this form, we arrive at a generalization of \cite[Theorem 1.5]{fu2022incidence}. However, Theorem \ref{incidence} gets rid of the $\delta^{-\eps}$ loss in many cases, and only imposes a $(\log \delta^{-1})$ loss in the critical case $\alpha + \beta = 3(n-1)$. By Construction 4 in \cite[Section 2]{fu2022incidence}, we see that at least in dimension $n = 2$, Theorem \ref{incidence} is sharp when $\alpha + \beta \neq 3$. When $\alpha + \beta = 3$, the $\log \delta^{-1}$ cannot be dropped at least in the case $\alpha = 1, \beta = 2$ due to the existence of Besicovitch sets, see e.g. \cite{keich1999lp}.}
\end{remark}

\begin{proof}[Proof of Theorem \ref{incidence}]
    Let $D = \delta^{-1}$ throughout this argument.

    Suppose $\alpha + \beta < 3(n-1)$. Apply Proposition \ref{prop:2.1} with $S = D$. Notice that $I(P^{S\delta^{-\eps}}, \T^{S\delta^{-\eps}}) \le |P| |\T|$. We also know $|P| \le K_\beta \delta^{-\beta}$ and $|\T| \le K_\alpha \delta^{-\alpha}$, so $$S^{1-n} I(P^{S\delta^{-\eps}}, \T^{S\delta^{-\eps}}) \le D^{-(n-1)} |P| |\T| \le D^{(n-1)/2} (K_\alpha K_\beta)^{1/2} |P|^{1/2} |\T|^{1/2}.$$
The summation in \eqref{eqn:2.1} becomes
    \begin{multline*}
        D^{(n-1)/2} |P|^{1/2} |\T|^{1/2} \sum_{1 \le r \le D \text{ dyadic}} r^{-3(n-1)/2} \cdot (K_\alpha r^\alpha)^{1/2} \cdot (K_\beta r^\beta)^{1/2} \\
        \lesim_{\alpha,\beta} D^{(n-1)/2} (K_\alpha K_\beta)^{1/2} |P|^{1/2} |\T|^{1/2}.
    \end{multline*}
    The fact $\alpha + \beta < 3(n-1)$ was used to generate a convergent geometric series.

    If $\alpha + \beta = 3(n-1)$, the same argument works, but the summation in \eqref{eqn:2.1} gives a $\log(\delta^{-1})$ factor.

    If $\alpha + \beta > 3(n-1)$, then let $$S = \max\left\{1, \left( \frac{|P| |\T|}{D^{n-1} K_\alpha K_\beta} \right)^{\kappa/(n-1)}\right\}.$$ Note that we have $S \le D^{\frac{(\alpha+\beta-(n-1))\kappa}{n-1}} = D$. As before, we have $$I(P^{S\delta^{-\eps}}, \T^{S\delta^{-\eps}}) \le |P| |\T|,$$ so 
    \[
    S^{-1} I(P^{S\delta^{-\eps}}, \T^{S\delta^{-\eps}}) \le S^{1-n} |P| |\T| \le D^{(n-1)\kappa} (K_\alpha K_\beta)^\kappa |P|^{1-\kappa} |\T|^{1-\kappa}.
    \]
Hence, the summation in \eqref{eqn:2.1} becomes
    \begin{align*}
        D^{(n-1)/2} |P|^{1/2} |\T|^{1/2} &\sum_{1 \le r \le S \text{ dyadic}} r^{-3(n-1)/2}(K_\alpha r^\alpha)^{1/2} \cdot (K_\beta r^\beta)^{1/2} \\
        &\lesim_{\alpha,\beta} D^{(n-1)/2} (K_\alpha K_\beta)^{1/2} |P|^{1/2} |\T|^{1/2} S^{(\alpha+\beta-3(n-1))/2} \\
        &\le D^{(n-1)\kappa} (K_\alpha K_\beta)^\kappa |P|^{1-\kappa} |\T|^{1-\kappa}.
    \end{align*}
\end{proof}

To obtain a result for $(\de,\alpha)$-sets of points and $(\de,\beta)$-sets of tubes, we apply the following ``decomposition'' lemma.

\begin{lemma}[\cite{orponen2022kaufman}, Lemma 3.5] \label{doubling}
    Let $(X,d)$ be a doubling metric space with constant $D\geq 1$. For every $\epsilon,t>0$, there exists $\de_0 = \de_0(\epsilon,D,t)>0$ such that the following holds for all $\de \in (0,\de_0]$. Let $P\subset B(x_0,1) \subset X$ be a $\de$-separated $(\de,\alpha,C)$-set of points in $X$. Then, we can partition $P$ such that 
    \[
    P = P_1\cup \dots \cup P_N
    \]
    where each $P_i$ is a Katz-Tao $(\de,\alpha,1)$-set and $N\leq C |P|\de^{\alpha-\epsilon}.$
\end{lemma}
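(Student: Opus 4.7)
The plan is a greedy assignment of points to colors combined with a dyadic pigeonhole. Fix the scales $r_j = 2^j\delta$ for $j = 0, 1, \ldots, J$ where $J = \lceil \log_2(\delta^{-1}) \rceil$, and for each $j$ fix, using the doubling property, an $r_j$-net $\mathcal{N}_j \subset B(x_0, 2)$ of cardinality at most $C_D r_j^{-d}$, where $d = \log_2 D$. A standard reduction that absorbs a doubling-dependent constant $c_D \in (0,1)$ shows that the Katz-Tao $(\delta,\alpha,1)$-condition for a $\delta$-separated set $Q \subset B(x_0,1)$ is implied by the following discretized condition: $|Q \cap B(x, r_j)| \leq c_D (r_j/\delta)^\alpha$ for every $j$ and every $x \in \mathcal{N}_j$ at which this bound is at least $1$ (smaller scales being automatic for $\delta$-separated sets in a doubling space).

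I would then enumerate the points of $P$ arbitrarily and assign each $p \in P$ to the smallest index $i$ for which $P_i \cup \{p\}$ still satisfies the discretized condition. Suppose that a point $p$ is refused by the first $N_0$ colors. Then for each $i \leq N_0$ there exists a dyadic scale $r_{j(i)}$ and a net point $x_i \in \mathcal{N}_{j(i)}$ with $p \in B(x_i, r_{j(i)})$ and $|P_i \cap B(x_i, r_{j(i)})| \gtrsim c_D (r_{j(i)}/\delta)^\alpha$. I would pigeonhole first over $j(i) \in \{0, \ldots, J\}$ and then over the $O_D(1)$ net points of $\mathcal{N}_{j^*}$ within distance $r_{j^*}$ of $p$ (doubling again), extracting a common scale $r_{j^*}$ and a common center $x^*$ witnessed by at least $N_0 / [(J+1) \cdot O_D(1)]$ colors. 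Summing these disjoint lower bounds and invoking the $(\delta, \alpha, C)$-set hypothesis at $(x^*, r_{j^*})$ gives
\[
    \frac{N_0}{(J+1) \cdot O_D(1)} \cdot c_D (r_{j^*}/\delta)^\alpha \;\lesssim\; |P \cap B(x^*, r_{j^*})| \;\leq\; C r_{j^*}^\alpha |P|.
\]
Rearranging yields $N \leq N_0 \lesssim_D (J+1) \cdot C |P| \delta^\alpha$, and choosing $\delta_0 = \delta_0(\epsilon, D)$ small enough that $J + 1 \leq \delta^{-\epsilon}$ for all $\delta \leq \delta_0$ delivers the desired bound $N \leq C|P|\delta^{\alpha - \epsilon}$.

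The main delicate point is the bookkeeping around the doubling and integer-rounding losses. The constant $c_D$ has to be chosen small enough in advance so that the output $P_i$ are Katz-Tao with constant exactly $1$ (rather than $O_D(1)$) at all scales $r \geq \delta$ and centers $x \in X$, not merely at the discretized test pairs; and the scale regime where $c_D(r_j/\delta)^\alpha < 1$ must be treated separately using only the $\delta$-separation of $P$ together with the doubling bound on the number of points of a $\delta$-separated set in a small ball. All other losses, namely the factor $(J+1)$ from the scale pigeonhole and the constants $c_D^{-1}$ and $O_D(1)$, are polylogarithmic or constant in $\delta^{-1}$ and are absorbed into the single $\delta^{-\epsilon}$ factor by the choice of $\delta_0$.
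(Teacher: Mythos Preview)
The paper does not give a proof of this lemma; it is quoted from \cite[Lemma~3.5]{orponen2022kaufman} and used as a black box, so there is no in-paper argument to compare against.

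Your greedy-coloring argument is correct and is a standard route to such decompositions. The skeleton---discretize the Katz--Tao test to finitely many (scale, net-center) pairs, assign each point to the first color still passing all tests, and when a point $p$ is refused by $N_0$ colors pigeonhole on the witnessing scale $r_{j^*}$ and net-center $x^*$ to find $\gtrsim N_0/\bigl((J{+}1)\,O_D(1)\bigr)$ disjoint partial color classes each contributing $\gtrsim c_D(r_{j^*}/\delta)^\alpha$ points to $B(x^*,r_{j^*})$, then invoke the $(\delta,\alpha,C)$-hypothesis on $P\cap B(x^*,r_{j^*})$---is sound, and the $(J{+}1)$ and $O_D(1)$ losses are indeed absorbed into $\delta^{-\epsilon}$ by the choice of $\delta_0$.

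One point to watch in the bookkeeping you flag at the end: at $r=\delta$ the Katz--Tao condition with constant exactly $1$ reads $|P_i\cap B(x,\delta)|\le 1$, which is strictly stronger than $\delta$-separation (a $\delta$-separated set may place up to $O_D(1)$ points in a single $\delta$-ball). Your parenthetical about ``the doubling bound on the number of points of a $\delta$-separated set in a small ball'' is the right ingredient, but it does not resolve the bottom scale by itself; rather, use it to first split $P$ into $O_D(1)$ subfamilies that are $C_D\delta$-separated for a sufficiently large constant $C_D$, run the greedy algorithm separately on each, and absorb the extra $O_D(1)$ factor in $N$ into $\delta^{-\epsilon}$.
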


We apply this result to Theorem \ref{incidence} and obtain the following.

\begin{theorem}\label{incidence2}
Let $\alpha\in [0,n]$ and $\beta\in [0,2(n-1)]$. Then, for every $\epsilon>0$, there exists a $\de_0 = \de_0(\epsilon)>0$ such that the following holds for all $\de \in (0,\de_0]$. If $P\subset B^n$ is a $(\de,\alpha,\de^{-\epsilon})$-set with $|P| \sim \delta^{-\alpha}$, and $\T$ is a $\de$-separated $(\de, \beta,\de^{-\epsilon})$-set with $|\T| \sim \delta^{-\alpha}$, then
    \[
    I(P,\T) \leq |P||\T| \de^{\kappa (\alpha+\beta-n+1) - 5\epsilon}
    \]
where $\kappa = \min\left\{\frac{n-1}{\alpha + \beta -n+1}, \frac{1}{2}\right \}$.
\end{theorem}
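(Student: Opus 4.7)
The plan is to deduce Theorem \ref{incidence2} from Theorem \ref{incidence} by paying a $\de^{-O(\epsilon)}$ price to pass from general $(\de,s,\de^{-\epsilon})$-sets to Katz-Tao $(\de,s,1)$-sets via Lemma \ref{doubling}. All of the hard analysis lives in Theorem \ref{incidence}; the present reduction is essentially a pigeonholing exercise.

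First I would verify that Lemma \ref{doubling} is applicable to both $P$ and $\T$. The ball $B^n$ with the Euclidean metric is obviously doubling, and the natural metric on $\de$-tubes in $\R^n$ (coming from the direction in $S^{n-1}$ together with the offset in a fixed transverse hyperplane) realizes the space of $\de$-tubes as a doubling metric space with constant depending only on $n$. Applied with $C=\de^{-\epsilon}$, the lemma produces partitions
\[
P = P_1 \sqcup \cdots \sqcup P_{N_P}, \qquad \T = \T_1 \sqcup \cdots \sqcup \T_{N_\T},
\]
with each $P_i$ a Katz-Tao $(\de,\alpha,1)$-set, each $\T_j$ a Katz-Tao $(\de,\beta,1)$-set, and
\[
N_P \le \de^{-\epsilon}\,|P|\,\de^{\alpha-\epsilon} \lesssim \de^{-2\epsilon}, \qquad N_\T \lesssim \de^{-2\epsilon},
\]
where I used $|P|\sim\de^{-\alpha}$ and the analogous cardinality hypothesis $|\T|\sim\de^{-\beta}$ on the tube set.

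Next, since $I(P,\T) = \sum_{i,j} I(P_i,\T_j)$, pigeonholing over the $N_P N_\T \lesssim \de^{-4\epsilon}$ pairs selects indices $(i_0,j_0)$ with $I(P_{i_0},\T_{j_0}) \gtrsim \de^{4\epsilon}\, I(P,\T)$. Applying Theorem \ref{incidence} with $K_\alpha=K_\beta=1$ to this pair, and absorbing the possible $\log\de^{-1}$ factor at $\alpha+\beta = 3(n-1)$ into an additional $\de^{-\epsilon}$ slack (valid once $\de$ is small enough in terms of $\epsilon$), yields
\[
\de^{4\epsilon}\,I(P,\T) \lesssim \de^{-\epsilon-(n-1)\kappa}\,|P|^{1-\kappa}\,|\T|^{1-\kappa}.
\]
Rewriting the right-hand side via $|P|^{1-\kappa}|\T|^{1-\kappa} = |P||\T| \cdot |P|^{-\kappa}|\T|^{-\kappa} \sim |P||\T|\,\de^{\kappa(\alpha+\beta)}$ and collecting exponents then delivers the claimed bound $I(P,\T) \le |P||\T|\,\de^{\kappa(\alpha+\beta-n+1)-5\epsilon}$. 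I expect no serious obstacle beyond confirming the doubling-metric-space statement for $\de$-tubes, which is $n$-dependent but $\de$-independent; once that is in place the argument reduces to pure exponent bookkeeping.
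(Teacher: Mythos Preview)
Your proposal is correct and follows essentially the same route as the paper: decompose $P$ and $\T$ into $\lesssim \de^{-2\epsilon}$ Katz--Tao pieces via Lemma~\ref{doubling}, then feed each piece into Theorem~\ref{incidence} and do the exponent bookkeeping. The only cosmetic difference is that the paper sums $I(P_i,\T_j)$ over all pairs and bounds each term uniformly by $\de^{-(n-1)\kappa}|P_i|^{1-\kappa}|\T_j|^{1-\kappa}\le \de^{-(n-1)\kappa-(\alpha+\beta)(1-\kappa)}$, whereas you pigeonhole to a single pair and then dominate $|P_{i_0}|^{1-\kappa}|\T_{j_0}|^{1-\kappa}\le |P|^{1-\kappa}|\T|^{1-\kappa}$; both arrive at the same $\de^{\kappa(\alpha+\beta-n+1)-5\epsilon}$ with no substantive divergence.
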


\begin{remark}
    \rm{In the proof of Theorem \ref{main_disc}, we let $\alpha = t \in (n-1,n]$ and $\beta = \gamma(\sigma,s) \in [0,2(n-1)]$. Note that these values of $\alpha$ and $\beta$ are in the domain of the above theorem.}
\end{remark}

\begin{proof}[Proof of Theorem \ref{incidence2}]
    By Lemma \ref{doubling} applied in both $\R^n$ and $\mathcal A(n,1)$, we may write 
    \[
    P = P_1\cup \dots \cup P_M \hspace{.4cm}\text{and} \hspace{.4cm}\T = \T_1\cup \dots \cup \T_N,
    \]
    where $M \leq |P|\de^{s-2\epsilon}$ and $N\leq \de^{t-2\epsilon}$, each $P_i$ is a Katz-Tao $(\de,s,1)$-set, and each $\T_j$ is a Katz-Tao $(\de,t,1)$-set. Hence, by \ref{incidence2}, we have 
    \begin{align*}
        I(P,\T) &\leq \sum_{i=1}^M \sum_{j=1}^N I(P_i,\T_j) \\
        &\leq \sum_{i=1}^M \sum_{j=1}^N \de^{-\epsilon} \de^{-(n-1)\kappa} |P||\T| \de^{\alpha\kappa} \de^{\beta\kappa} \\
        &= \sum_{i=1}^M \sum_{j=1}^N \de^{\kappa(\alpha + \beta -n+1) -\alpha - \beta - \epsilon} \\
        &\leq |P||\T| \de^{\alpha + \beta -4\epsilon} \de^{\kappa(\alpha + \beta -n+1) -\alpha - \beta - \epsilon} \\
        &= |P| |\T| \de^{\kappa(\alpha + \beta-n+1) -5\epsilon}.
     \end{align*}
\end{proof}

\section{A Dual Furstenberg Set Estimate}\label{sec4}

The purpose of this section is to prove a dual Furstenberg set estimate integral to the proof of Theorem \ref{main}, analogous to Lemma 3.13 in \cite{orponen2022kaufman}. This result will discuss $(\de,s,C)$-sets of $\de$-tubes $\mathbb{T}_x$, $x\in B^n$, such that $x\in T$ for all $T\in \mathbb{T}_x$. Note that one can check that the $(\de,s,C)$-set property of $\mathbb{T}_x$ is equivalent to the directions of the tubes forming a $(\de,s,C')$-set as a subset of $\mathbb{S}^{n-1}$.

We begin by proving a cardinality result for this dual Furstenberg set. This can be derived from \cite[Corollary 2.10]{ren2023discretized}, but we present the simple proof for completeness.

\begin{lemma}\label{furstenberg_cardinality}
    For every $\xi >0$, there exists $\de_0 = \de_0(\xi)>0$ and $\epsilon = \epsilon(\xi)>0$ such that the following holds for all $\de \in (0,\de_0]$. Let $\alpha\in [0,n-1]$ and $\beta \in (0,n].$ Assume that $P\subset B^n$ is a non-empty $(\de,\beta,\de^{-\epsilon})$-set. Further, assume that for every $p\in P$ there exists a $(\de, \alpha, \de^{-\epsilon})$-set of tubes $\T_p$ with the property that $p\in T$ for all $T\in \T_p$. Then, we have that
    \[
    \left|\bigcup_{p\in P} \T_p\right| \geq \de^{-\gamma(\alpha,\beta)+\xi}
    \]
    where $\gamma(\alpha,\beta) = \alpha + \min \{\alpha, \beta\},$ and $\left| \bigcup_{p\in P} \T_p \right|$ is the cardinality of the set of tubes $\bigcup_{p\in P} \T_p .$
\end{lemma}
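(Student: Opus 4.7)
My plan is to prove Lemma \ref{furstenberg_cardinality} by invoking \cite[Corollary 2.10]{ren2023discretized}, a discretized dual Furstenberg-type estimate in $\mathbb{R}^n$ that supplies precisely the lower bound $\big|\bigcup_p \T_p\big| \geq \delta^{-\gamma(\alpha,\beta)+\xi}$ under hypotheses that agree with ours up to a cosmetic translation. The first step is to verify these hypotheses: our combinatorial $(\delta,s,\delta^{-\epsilon})$-set conditions on $P$ and on each $\T_p$ can be recast as the non-concentration / Frostman-type conditions that Ren's corollary uses. If a uniformity assumption on $|\T_p|$ is required, I would dyadically pigeonhole the values of $|\T_p|$ and pass to the most populous bucket of $P$; this preserves the $(\delta,\beta,\delta^{-\epsilon})$-set structure up to a harmless $(\log \delta^{-1})^{O(1)}$ factor, easily absorbed into the $\delta^{-\xi}$ slack by taking $\epsilon = \epsilon(\xi)$ small enough.

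The second step is the direct application of the corollary, yielding the claimed bound. Both regimes---$\beta \geq \alpha$ (giving $\gamma = 2\alpha$) and $\beta < \alpha$ (giving $\gamma = \alpha + \beta$)---are handled by a single invocation, with the quantitative dependences $\epsilon = \epsilon(\xi)$ and $\delta_0 = \delta_0(\xi)$ inherited from Ren's statement.

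The main obstacle is the authors' own promise of a self-contained ``simple proof for completeness'', evidently bypassing Ren's heavier machinery. A naive Cauchy--Schwarz double-count---writing $(\sum_p |\T_p|)^2 \leq |\T| \cdot \sum_T |\{p : T \in \T_p\}|^2$, bounding the multiplicity $|\T_{p_1} \cap \T_{p_2}| \lesssim |p_1 - p_2|^{-(n-1)}$ for $p_1 \neq p_2$, and summing against the $(\delta,\beta)$-set condition on $P$---recovers the sharp exponent only when $\beta \geq n-1$; for smaller $\beta$, nearby pairs of points swamp the contribution and the naive bound falls short. A plausible route for an elementary proof is the slicing identity $\big|\bigcup_p \T_p\big| \sim \sum_v \big|\pi_{v^\perp}(P_v)\big|_\delta$, where $P_v = \{p \in P : v \in D_p\}$ and $D_p \subset \mathbb{S}^{n-1}$ is the direction set of $\T_p$, combined with a discretized Marstrand-type projection estimate valid for most directions $v$; this recasts the dual Furstenberg count as an average over orthogonal projections and makes the exponent $\gamma(\alpha,\beta) = \alpha + \min(\alpha,\beta)$ transparent via a case split on whether the projection sees the ``full'' dimension of $P_v$ or not.
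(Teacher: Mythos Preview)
Your primary proposal---invoking \cite[Corollary~2.10]{ren2023discretized}---is correct; the paper itself notes that the lemma ``can be derived from \cite[Corollary~2.10]{ren2023discretized}.'' However, the paper then supplies an elementary self-contained argument, and your assessment of why a direct Cauchy--Schwarz count should fail is mistaken.

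The error is in your multiplicity bound. You write $|\T_{p_1}\cap\T_{p_2}|\lesssim |p_1-p_2|^{-(n-1)}$, which is the crude geometric bound coming only from $\delta$-separation of tubes through two fixed points. The paper instead exploits the $(\delta,\alpha,\delta^{-\epsilon})$-set hypothesis on $\T_{p_1}$: all tubes through both $p_1$ and $p_2$ lie in a ball of radius $\sim \delta/|p_1-p_2|$ in the direction space, so the non-concentration condition gives
\[
|\T_{p_1}\cap\T_{p_2}|\;\lesssim\;\delta^{-\epsilon}\,|\T_{p_1}|\left(\frac{\delta}{|p_1-p_2|+\delta}\right)^{\alpha}.
\]
With exponent $\alpha$ in place of $n-1$, the dyadic sum over $r=|p_1-p_2|$ against the $(\delta,\beta)$-set condition on $P$ becomes $\sum_r r^{\beta}(\delta/r)^{\alpha}$, whose dominant scale switches exactly at $\beta=\alpha$ and yields $\gamma(\alpha,\beta)=\alpha+\min\{\alpha,\beta\}$ for \emph{all} $\beta\in(0,n]$, not just $\beta\ge n-1$. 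After a preliminary pigeonhole to make $|\T_p|\equiv M$ uniform (which you correctly anticipate), the paper runs precisely this Cauchy--Schwarz double count on $J(P,\T)=\{(p,p',T):T\in\T_p\cap\T_{p'}\}$ and obtains the sharp bound in a few lines. Your proposed slicing/Marstrand detour is therefore unnecessary; the ``naive'' approach already succeeds once the multiplicity estimate uses the $\alpha$-set structure rather than the ambient dimension.
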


\begin{proof}
    First, since $|\T_p| \in (\delta^{-\alpha+\eps}, \delta^{-1})$ for every $p \in P$, we may find a subset $P' \subset P$ with $|P'| \gesim (\log \delta^{-1})^{-1} |P|$ such that for each $p \in P'$, we have $|T_p| \in [M, 2M)$, for some $M \in (\delta^{-\alpha+\eps}, \delta^{-1})$. Next, we may prune each $|\T_p|$ for $p \in P'$ to have exactly $M$ elements, at the cost of making $\T_p$ a $(\delta, \alpha, 2\delta^{-\eps})$-set of tubes. Notice that $P'$ is still a $(\delta, \beta, \delta^{-2\eps})$-set. Now if we prove the lemma for $\eps$ and $\{ T_p \}_{p \in P'}$, then we also prove the lemma for $\frac{\eps}{2}$ and $\{ T_p \}_{p \in P}$. Hence, we may assume without loss of generality that $|\T_p| = M$ for all $p \in P$.

    Let $\T := \bigcup_{p\in P} \T_p$. Then, we find a lower bound for $|\T|$ by finding an upper and lower bound to the cardinality of the set 
    \[
    J(P,\T) := \{(p,p',T) \in P^2 \times \T \mid T \in \T_p \cap \T_{p'} \}.
    \]
    (This is similar to, but different from the set $\{(p,p',T) \mid p, p' \in T, T \in \T \}$, because there can be tubes $T \in \T$ passing through points $p$ in $P$, yet $T \not\in \T(p)$.)
Then we have that
\begin{align*}
|J(P, \T)| &= \sum_{T\in \T} \#\{(p,p') \in P^2 : T \in \T_p \cap \T_{p'} \} \\
&= \sum_T |\{ p \in P : T \in \T_p \}|^2 \\
&\geq |\T|^{-1} \left(\sum_{T} |\{ p \in P : T \in \T_p \}|\right)^2 \\
&= |\T|^{-1} \left(\sum_{p} |\T(p)|\right)^2 \\
&\geq |\T|^{-1}(|P| M)^2.
\end{align*}

We now find an upperbound to $J(P,\T')$ using a geometric argument.
\begin{align*}
    |J(P,\T')|&= \sum_p \sum_{p',T} \mathbf{1}(p\in T) \mathbf{1}(p'\in T)\\
    &\lesssim \sum_p \sum_{p'} \#(\T_p \cap \T_{p'}).
\intertext{Notice that $\#(\T_p \cap \T_{p'}) \lesim M\delta^{-\eps} \cdot (\frac{\delta}{d(p, p') + \delta})^\alpha$ since $\T_p$ is a $(\de,\alpha, \de^{-\epsilon})$-set and the set of tubes through both $p, p'$ lie in a common $\frac{\delta}{d(p, p') + \delta}$-tube. Since $P$ is a $(\de,\beta, \de^{-\epsilon})$-set, using a dyadic decomposition, there are $|P| \de^{-\epsilon} r^\beta$ many $\delta$-balls with $d(p, p') \sim r$. Hence, by summing over dyadic $r \in (\delta, 1)$, we get}
    &\lesssim \sum_{p} \sum_{\de \leq r \leq 1} |P|\delta^{-\eps} r^{\beta} \cdot M\delta^{-\eps} \cdot \left( \frac{\delta}{r} \right)^\alpha \\
    &\leq M|P|\delta^{\alpha-2\eps} \sum_{p} \sum_r r^{\beta - \alpha} \\
    &\lesim M|P|^2 \delta^{\alpha-2\eps} \max\{\de^{\beta - \alpha}, 1\}.
\end{align*}
Hence, combining the upper and lower bounds, we obtain 
\[
    |\T|^{-1} |P|^2 M^2 \leq |P|^2 M \delta^{\alpha-2\eps} \max\{\de^{\beta - \alpha}, 1\}.
\]
Therefore, using $M \ge \delta^{-\alpha+\eps}$, we get
\[
|\T| = \left|\bigcup_{x\in P} \T_x\right| \geq M \delta^{-\alpha+2\eps} \min\{\de^{\alpha - \beta}, 1\} \gesim \de^{-2\alpha + \max\{\alpha-\beta, 0\} + 3\eps}.
\]
Notice that $2\alpha - \max\{\alpha-\beta, 0\} = \alpha + \min\{\alpha,\beta\}$. This gives the desired result if we let $\eps := \frac{\xi}{4}$.
\end{proof}

Using this, and a standard Hausdorff content argument, we show that the union of tubes contains a $(\de,\gamma(\alpha,\beta), \de^{-\xi})$-set of tubes. First, we need a slight generalization of Proposition A.1 of \cite{Fassler_2014} but with the exact same proof. The statement is in fact an equality, the $\le$ direction being easier.
\begin{lemma}\label{lem:discrete frostman}
    In $[0, 1]^n$, let $P$ be a set of dyadic $\delta$-cubes, and let $f : \cD_{[\delta, 1]} \to [1, \infty)$ be a function, where $\cD_{[\delta, 1]}$ is the set of dyadic cubes with side length in $[\delta, 1]$. Then
    \begin{multline}\label{eqn:discrete frostman}
        \max \left\{ |P'| : P' \subset P, |P' \cap Q| \le f(Q) \, \forall Q \in \cD_{[\delta, 1]} \right\} \\
        \ge \min \left\{ \sum_{Q \in \cC} f(Q) \mid \cC \subset \cD_{[\delta, 1]}, \cC \text{ covers } P \right\}.
    \end{multline}
\end{lemma}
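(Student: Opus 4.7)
My plan is to adapt the proof of \cite[Proposition A.1]{Fassler_2014} essentially verbatim: the lemma is a strong-duality statement for a pair of optimization problems with a dyadic-tree constraint structure. I would view $\cD_{[\delta, 1]}$ as the nodes of a $2^n$-ary tree rooted at $[0,1]^n$, with the $\delta$-cubes at the leaves, so that the left-hand side of \eqref{eqn:discrete frostman} is a capacitated packing problem on this tree and the right-hand side is its covering dual. The ``$\le$'' direction is immediate weak duality: any admissible $P'$ and any cover $\cC$ satisfy
\begin{equation*}
    |P'| \;\le\; \sum_{Q \in \cC} |P' \cap Q| \;\le\; \sum_{Q \in \cC} f(Q),
\end{equation*}
since each element of $P'$ lies in at least one $Q \in \cC$.

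For the nontrivial ``$\ge$'' direction, I would proceed by induction on the height of a node. For each $Q \in \cD_{[\delta, 1]}$, let $u(Q)$ denote the restricted max packing value (maximum of $|P' \cap Q|$ over admissible $P' \subset P$ satisfying only the constraints $|P' \cap Q'| \le f(Q')$ with $Q' \subseteq Q$), and let $v(Q)$ denote the restricted min cover value (minimum of $\sum_{Q' \in \cC} f(Q')$ over covers of $P \cap Q$ by dyadic cubes $Q' \subseteq Q$). The key step is to verify that both functions satisfy the identical one-step recursion
\begin{equation*}
    w(Q) \;=\; \min\!\Bigl\{ f(Q),\ \textstyle\sum_{i} w(Q_i)\Bigr\}
\end{equation*}
over the $2^n$ dyadic children $Q_i$ of $Q$, with matching base cases at the $\delta$-scale leaves (both equal to $\mathbf{1}_{Q \in P}$, where the assumption $f \ge 1$ is used). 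Iterating from the leaves to the root then yields $u([0,1]^n) = v([0,1]^n)$, which gives the claim.

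The main obstacle is the integrality gap: the packing $u$ must be realized by an actual integer subset of $P$, whereas $v$ is a sum of possibly non-integer capacities $f(Q)$. This is precisely what the hypothesis $f \ge 1$ is designed to resolve — it forces the leaf base case to be integer-valued, and the tree recursion (whose constraint matrix is totally unimodular) propagates integrality upward; equivalently, when the bottleneck at an interior node $Q$ is $f(Q)$, one may greedily remove leaves from the children's optimal packings one at a time without breaking any downstream constraint, since every affected node has slack at least $1$. I would invoke this step via F\"assler--Orponen rather than re-derive it.
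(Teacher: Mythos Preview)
Your approach is essentially the same as the paper's: both defer to F\"assler--Orponen and exploit the dyadic tree structure. The paper carries it out as an explicit fine-to-coarse construction (start with $P_k=\cC_k=P$; at each scale $i$ trim the ``rich'' cubes $Q\in\cD_i$ with $|P_{i+1}\cap Q|>f(Q)$ down to $f(Q)$ elements and put $Q$ into the cover; the resulting pair $(P_0,\cC_0)$ witnesses equality), whereas you phrase the same procedure as the recursion $w(Q)=\min\bigl\{f(Q),\sum_i w(Q_i)\bigr\}$ satisfied by both $u$ and $v$. These are two presentations of the same dynamic-programming argument.

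One small correction to your sketch: at a $\delta$-leaf $Q$ the covering value is $v(Q)=f(Q)\cdot\mathbf{1}_{Q\in P}$, not $\mathbf{1}_{Q\in P}$, since the only cover of $\{Q\}$ by dyadic subcubes of $Q$ is $\{Q\}$ itself. The hypothesis $f\ge 1$ alone does not force $f(Q)=1$ on leaves, and in fact the lemma as written is false without that extra condition (take $n=1$, $\delta=\tfrac12$, $P=\{[0,\tfrac12]\}$, $f([0,\tfrac12])=3$, $f([0,1])=10$: then the left side of \eqref{eqn:discrete frostman} is $1$ and the right side is $3$). The paper's proof shares this tacit assumption---its base invariant (3) at level $i=k$ reads $|P_k\cap Q|=f(Q)$ for $Q\in\cC_k=P$, which forces $f(Q)=1$ on $\delta$-cubes---and the only application in the paper uses $f(Q)=(r(Q)/\delta)^s$, which does satisfy it. So this is a wrinkle in the lemma's statement rather than a defect peculiar to your argument.
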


\begin{proof}
    Let $\delta = 2^{-k}$. For each $0 \le i \le k$, we construct a collection $\cC_i \subset \cD_{\le 2^{-i}}$ and a subset $P_i \subset \cC_i$ such that:
    \begin{enumerate}
        \item $P_0 \subset P_1 \subset \cdots \subset P_k = P$;

        \item for each $Q \in \cD_i$, we have $|P_i \cap Q| \le f(Q)$;
    
        \item for each $Q \in \cC_i$, we have $|P_i \cap Q| = f(Q)$.
    \end{enumerate}

    For $i = k$, we simply take $P_k = \cC_k = P$. Now given $\cC_{i+1}$ and $P_{i+1}$, we say that a square $Q \in \cD_r$ is rich for $P_{i+1}$ if $|P_{i+1} \cap Q| > 2^{(k-i) s}$; otherwise, $Q$ is poor.

    Let $\cC_i$ be the union of (elements of $\cC_{i+1}$ inside poor squares) and rich squares. Let $P_i$ be the union of (elements of $P_{i+1}$ inside poor squares) and, for each rich square $Q$, an arbitrary $2^{(k-i) s}$-element subset of $P_{i+1} \cap Q$. Then (1) is just true; (2) is true by definition of rich/poor square; and (3) is true by using (3) for $\cC_{i+1}$ and definition of rich/poor square.

    Now for any $r = 2^{-i}$ and $Q \in \cD_r$, we have by (1) and (2)
    \[
        |P_0 \cap Q| \le |P_i \cap Q| \le f(Q).
    \]

    Furthermore, by (3) we have $|P_0| = \sum_{Q \in \cC_0} |P_0 \cap Q| = \sum_{Q \in \cC_0} f(Q)$. Now $(P_0, \cC_0)$ is a witness to \eqref{eqn:discrete frostman}.
\end{proof}

\begin{corollary}\label{cor:discrete frostman A(n, 1)}
    Let $\T$ be a set of $\delta$-tubes, and let $s \ge 0$. Let $r(T)$ be the thickness of tube $T$, and let
    \begin{equation*}
        K = \min \left\{ \sum_{T \in \cC} r(T)^s \mid \cC \text{ is cover of } \T \text{ by dyadic } r'\text{-tubes, } r' \in [\delta, 1] \right\}.
    \end{equation*}
    Then $\T$ contains a $(\delta, s, O_n (1/K))$-set.
\end{corollary}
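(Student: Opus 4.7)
The plan is to transfer the combinatorial content of Lemma \ref{lem:discrete frostman} from the Euclidean setting to the space of lines $\mathcal{A}(n,1)$ via a nested dyadic structure on tubes. First, I would set up a family $\cD_{[\delta, 1]}$ of dyadic tubes: a $\delta$-tube in $B^n$ is determined up to constants by its direction in $\mathbb{S}^{n-1}$ (modulo antipodal identification) and its offset in the orthogonal hyperplane, so the product of standard dyadic decompositions of $\mathbb{S}^{n-1}$ into caps of radius $2^{-i}$ and of the orthogonal hyperplane into cubes of side $2^{-i}$ yields, for each $r' = 2^{-i} \in [\delta, 1]$, a family of dyadic $r'$-tubes with two key features: each dyadic $r'$-tube decomposes into $O_n(1)$ dyadic $(r'/2)$-tubes, and any (not necessarily dyadic) $r'$-tube is contained in the union of $O_n(1)$ dyadic $r'$-tubes.

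Second, I would apply Lemma \ref{lem:discrete frostman} in this dyadic framework with $P = \T$ and $f(\bT) = r(\bT)^s$. The iterative pruning argument in the proof of that lemma uses only the nesting property of the dyadic decomposition (and the fact that coarser cells are disjoint unions of finer cells), so it carries over to the tube space with cosmetic modifications. It produces a subset $\T' \subseteq \T$ satisfying $|\T' \cap \bT| \le r(\bT)^s$ for every dyadic $\bT \in \cD_{[\delta, 1]}$, together with $|\T'| \ge K$, where $K$ is the minimum covering sum appearing in the statement.

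Third, I would upgrade this dyadic Frostman condition on $\T'$ to the genuine $(\delta, s, C)$-set condition. Given any $r'$-tube $\bT_{r'}$ with $r' \in [\delta, 1]$, the covering property of the dyadic structure lets me cover it by $O_n(1)$ dyadic $r'$-tubes, and hence $|\T' \cap \bT_{r'}| \le O_n(r'^s)$. Combined with $|\T'| \ge K$, this gives
\[
|\T' \cap \bT_{r'}| \;\le\; O_n(r'^s) \;\le\; \frac{O_n(1)}{K} \cdot r'^s \cdot |\T'|,
\]
which is precisely the $(\delta, s, O_n(1/K))$-set property.

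The one non-trivial step is constructing the dyadic structure on $\mathcal{A}(n, 1)$ with nesting and covering constants depending only on $n$; this uses the doubling nature of the natural metric on lines in $B^n$ but is essentially routine. Once that is in place, Lemma \ref{lem:discrete frostman} does all of the combinatorial heavy lifting, and the remainder of the argument is a direct translation of the Euclidean proof.
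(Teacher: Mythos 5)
Your approach is the same as the paper's: transfer a dyadic structure from $[0,1]^n$ to the space of tubes via a local chart, apply Lemma \ref{lem:discrete frostman} to extract a subfamily satisfying a dyadic Frostman-type condition, and upgrade to the genuine $(\delta, s, C)$-set condition by covering an arbitrary $r'$-tube with $O_n(1)$ dyadic $r'$-tubes.

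However, your choice $f(\bT) = r(\bT)^s$ has a normalization problem. Since $r(\bT) \in [\delta, 1]$, this $f$ takes values in $[\delta^s, 1]$, whereas Lemma \ref{lem:discrete frostman} requires $f : \cD_{[\delta,1]} \to [1,\infty)$. This hypothesis is not cosmetic: for a dyadic $\delta$-tube $\bT$, the constraint $|\T' \cap \bT| \le r(\bT)^s = \delta^s < 1$ forces $|\T' \cap \bT| = 0$, so the only admissible $\T'$ is empty and the claimed lower bound $|\T'| \ge K$ cannot hold. (Concretely, the base step $P_k = \cC_k = P$ in the proof of Lemma \ref{lem:discrete frostman} needs $f(Q) \ge 1$ at scale $\delta$.) The paper instead uses $f(\bT) = \bigl( r(\bT)/\delta \bigr)^s \in [1, \delta^{-s}]$, which yields $|\T'| \ge K \delta^{-s}$ and $|\T' \cap \bT| \le (r(\bT)/\delta)^s$ for every dyadic $\bT$. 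Then for an arbitrary tube one gets $|\T' \cap \bT| \lesssim_n (r(\bT)/\delta)^s = \delta^{-s} r(\bT)^s \le \tfrac{1}{K} |\T'| \, r(\bT)^s$, which is exactly the $(\delta, s, O_n(1/K))$-set condition. With this rescaling, the rest of your argument goes through as written.
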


\begin{proof}
    Since $\mathcal{A}(n, 1)$ is locally homeomorphic to $[0, 1]^n$, we can apply Lemma \ref{lem:discrete frostman} with $f(T) = \left( \frac{r(T)}{\delta} \right)^s$. The maximizing set $\T'$ satisfies $|\T'| \ge K \cdot \delta^{-s}$. We check that $\T'$ is a $(\delta, s, O_n (1/K))$-set. Fix $T$; we can cover it with $O_n (1)$ many dyadic tubes of radius $\sim r(T)$, so without loss of generality $T$ is dyadic. Then by the condition on $\T'$, we have
    \[
        |P' \cap T| \lesssim_n \left( \frac{r(T)}{\delta} \right)^s \le \frac{1}{K} |P'| \cdot r(T)^s.
    \]
\end{proof}

\begin{lemma}\label{furstenberg}
    For every $\xi >0$, there exists $\de_0 = \de_0(\xi)>0$ and $\epsilon = \epsilon(\xi)>0$ such that the following holds for all $\de \in (0,\de_0]$. Let $\alpha\in [0,n-1]$ and $\beta \in (0,n].$ Assume that $P\subset B^n$ is a non-empty $(\de,\beta,\de^{-\epsilon})$-set. Further, assume that for every $p\in P$ there exists a $(\de, \alpha, \de^{-\epsilon})$-set of tubes $\T_p$ with the property that $x\in T$ for all $T\in \T_p$. Then, the union 
    \[
    \T := \bigcup_{p\in P} \T_p
    \]
    contains a $(\de,\gamma(\alpha,\beta), O_N(\delta^{-\xi}))$-set of tubes $\T'$, where $\gamma(\alpha,\beta) = \alpha + \min \{\alpha, \beta\}$. 
\end{lemma}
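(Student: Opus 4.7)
By Corollary \ref{cor:discrete frostman A(n, 1)}, to extract a $(\delta, \gamma(\alpha, \beta), O_n(\delta^{-\xi}))$-set from $\T$ it suffices to prove that every dyadic cover $\cC = \{T_i\}$ of $\T$ by $r_i$-tubes ($r_i \in [\delta, 1]$) satisfies
\[
\sum_{T_i \in \cC} r_i^{\gamma(\alpha, \beta)} \gtrsim_n \delta^{\xi/2}.
\]
If any single $r_i \geq \delta^{\xi/(2\gamma(\alpha,\beta))}$, then $r_i^{\gamma(\alpha,\beta)} \geq \delta^{\xi/2}$ already; otherwise I partition $\cC = \bigsqcup_r \cC_r$ into the $O(\log \delta^{-1})$ dyadic scales in $[\delta, \delta^{\xi/(2\gamma(\alpha,\beta))})$ and proceed by pigeonhole.

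Since each $\T_p$ is covered by $\cC$, a double pigeonhole over scales and over $p$ produces a single scale $r$, a subset $P' \subseteq P$ with $|P'| \gtrsim (\log \delta^{-1})^{-1} |P|$, and for each $p \in P'$ a subset $\T_p' \subseteq \T_p$ of size $\gtrsim (\log \delta^{-1})^{-1} |\T_p|$ such that every $T \in \T_p'$ is contained in some $T' \in \cC_r$ which necessarily passes through $p$. Let $\cC_r(p)$ be the collection of such covering tubes. After pigeonholing, $P'$ is a $(\delta, \beta, O(\delta^{-2\eps}))$-set and each $\T_p'$ a $(\delta, \alpha, O(\delta^{-2\eps}))$-set. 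The Frostman condition on $\T_p'$ forces each $T' \in \cC_r(p)$ to contain $\lesssim \delta^{-2\eps} r^\alpha |\T_p'|$ elements of $\T_p'$; consequently, for any dyadic $r'$-tube cover $\cC'$ of $\cC_r(p)$ (with $r' \in [r, 1]$), transitivity of containment yields $\sum_{T' \in \cC'} (r')^\alpha \gtrsim \delta^{2\eps}$. By Corollary \ref{cor:discrete frostman A(n, 1)} applied at base-scale $r$, this produces a $(r, \alpha, O(\delta^{-C\eps}))$-subset $\cU_p \subseteq \cC_r(p)$ with $|\cU_p| \gtrsim \delta^{O(\eps)} r^{-\alpha}$. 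Passing to a maximal $r$-separated subset $P'' \subseteq P'$ — a $(r, \beta, O(\delta^{-C\eps}))$-set at scale $r$ — I retain for each $p'' \in P''$ the $(r, \alpha, O(\delta^{-C\eps}))$-set of $r$-tubes $\cU_{p''}$ through $p''$.

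Choosing $\eps = \eps(\xi) > 0$ small enough that both $\delta^{-C\eps} \leq r^{-\eps_0(\xi/4)}$ and $|\cU_{p''}| \geq r^{-\alpha + \eps_0(\xi/4)}$ hold uniformly over $r \in [\delta, \delta^{\xi/(2\gamma(\alpha,\beta))})$ — achievable because $\log r^{-1} \gtrsim_\xi \log \delta^{-1}$ in this range, where $\eps_0(\cdot)$ denotes the hypothesis constant supplied by Lemma \ref{furstenberg_cardinality} — I apply Lemma \ref{furstenberg_cardinality} at scale $r$ with parameter $\xi/4$ to conclude
\[
|\cC_r| \;\geq\; \Bigl|\bigcup_{p'' \in P''} \cU_{p''}\Bigr| \;\geq\; r^{-\gamma(\alpha, \beta) + \xi/4}.
\]
Therefore $\sum_i r_i^{\gamma(\alpha, \beta)} \geq |\cC_r| \, r^{\gamma(\alpha, \beta)} \geq r^{\xi/4} \geq \delta^{\xi/4} \geq \delta^{\xi/2}$, as required.

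\textbf{Main obstacle.} The crux is the $\eps$-bookkeeping: one must choose the hypothesis $\eps$ small enough that all refinement losses (logarithmic factors from the double pigeonhole, loss in the Hausdorff content extraction of $\cU_p$, and the conversion from $\delta$-scale Frostman conditions to $r$-scale ones) are absorbed, so that at the intermediate scale $r$ the resulting configurations satisfy the hypotheses of Lemma \ref{furstenberg_cardinality} with its input constant. The splitting of dyadic scales into $r \geq \delta^{\xi/(2\gamma(\alpha,\beta))}$ (trivial: a single tube already contributes $\geq \delta^{\xi/2}$) and $r < \delta^{\xi/(2\gamma(\alpha,\beta))}$ (where the cardinality lemma applies) is precisely what makes this bookkeeping feasible; without it, for $r$ comparable to $1$ the constraint $\delta^{-C\eps} \leq r^{-\eps_0(\xi/4)}$ would fail. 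A secondary subtlety is the extraction of a genuine $(r, \alpha)$-\emph{subset} $\cU_p \subseteq \cC_r(p)$ (as opposed to a set of merely adequate cardinality), for which the dyadic-cover formulation of Corollary \ref{cor:discrete frostman A(n, 1)} — the Hera-style Hausdorff content argument flagged in the remark after Theorem \ref{dualFurstEst} — is essential.
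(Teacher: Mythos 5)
Your overall scheme matches the paper's: cover $\T$ dyadically, pigeonhole to a single dyadic thickness $r$ and a subset $P'$ with $|P'| \gtrsim (\log\delta^{-1})^{-1}|P|$, invoke Lemma~\ref{furstenberg_cardinality} at scale $r$ to get $|\cC_r| \gtrsim r^{-\gamma(\alpha,\beta)+\xi'}$, and then conclude $K \gtrsim \delta^\xi$ so that Corollary~\ref{cor:discrete frostman A(n, 1)} produces the desired $(\de,\gamma(\alpha,\beta),O_n(\de^{-\xi}))$-subset. You add two genuinely useful refinements that the paper leaves implicit: the split between $r \geq \delta^{\xi/(2\gamma)}$ (trivial, since a single cover tube already contributes $\geq \delta^{\xi/2}$) and $r < \delta^{\xi/(2\gamma)}$, which is precisely what lets you absorb $\delta^{O(\eps)}$ losses as $r^{O(\eps)}$; and the explicit extraction, via Corollary~\ref{cor:discrete frostman A(n, 1)} at base-scale $r$, of a genuine $(r,\alpha,O(\delta^{-O(\eps)}))$-subset $\cU_p \subseteq \cC_r(p)$, rather than treating the raw covering tubes as if they already satisfied an $r$-scale Frostman condition.

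The one real gap is the claim that \emph{a maximal $r$-separated subset $P''\subseteq P'$ is a $(r,\beta,O(\delta^{-C\eps}))$-set at scale $r$}. This is asserted, not proved, and it is not a consequence of $P'$ being a $(\delta,\beta,O(\delta^{-2\eps}))$-set at scale $\delta$. The issue is the gap between $|P'|$ and $|P'|_r$: from the Frostman condition one only gets $|P'|_r \gtrsim \delta^{2\eps} r^{-\beta}$, while $|P'' \cap B(x,r')| \leq |P'\cap B(x,r')| \lesssim \delta^{-2\eps} r'^\beta |P'|$, and the ratio $|P'|/|P'|_r$ can be as large as $\sim \delta^{-2\eps} r^\beta |P'|$, which for $|P'| \sim \delta^{-\beta}$ is a polynomial (not $\delta^{O(\eps)}$) factor. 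In other words, the Frostman condition at scale $\delta$ controls how many points of $P'$ sit in each $r$-ball, but it does not control how the \emph{occupied} $r$-balls are distributed among coarser scales; the $r$-skeleton of a $(\delta,\beta,C)$-set need not be a $(r,\beta,O(C))$-set. The fix is exactly the argument you already run for the tubes: apply Lemma~\ref{lem:discrete frostman} at base-scale $r$ to the $r$-cubes meeting $P'$ with $f(Q) = (r(Q)/r)^\beta$, using that any cover $\cC'$ of $P'$ by dyadic cubes of side $\geq r$ satisfies $\sum_{Q\in\cC'} r(Q)^\beta \geq \delta^{2\eps}$ (from the $\delta$-scale Frostman bound $|P'\cap Q| \leq \delta^{-2\eps} r(Q)^\beta|P'|$ and $\sum_Q |P'\cap Q| \geq |P'|$). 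This hands you a subset of $r$-cubes, hence an $r$-separated set of representatives, that \emph{is} a $(r,\beta,O(\delta^{-2\eps}))$-set, and then your $\eps$-bookkeeping closes as written. Note that the paper itself is terse on this very step --- it applies Lemma~\ref{furstenberg_cardinality} ``to $\cC_r$'' at scale $r$ without spelling out how the $\delta$-scale Frostman conditions on $P'$ and $\T_p\cap\cC_r$ are converted into $r$-scale ones --- so your instinct to address the conversion explicitly is the right one, but the maximal-separated-subset shortcut needs to be replaced by the Hausdorff-content extraction.
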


\begin{proof}
    The proof of this statement essentially follows that of \cite[Lemma 3.3]{hera2020improved}, though we work through the details here. 
    
    By Corollary \ref{cor:discrete frostman A(n, 1)}, it suffices to check that the RHS of Corollary \ref{cor:discrete frostman A(n, 1)} is large. Fix a cover $\cC$ of $\T$ by dyadic tubes with thickness between $\de$ and $1$ (i.e. a cover satisfying the conditions of Corollary \ref{cor:discrete frostman A(n, 1)}). By assumption, for every $p\in P$, there exists a set $\T_p$ of tubes that is a $(\de,\alpha, \de^{-\epsilon})$-set. Given that $\cC$ is a cover of $\T$ and $\T_p \subset \T$, we know that $\cC$ is a cover of $\T_p$.

    For each $p\in P$, the tubes in $\cC$ covering $\T_p$ may be of different thicknesses, but by dyadic pigeonholing we may pick a ``popular'' sized dyadic thickness for tubes covering $\T_p.$ Rigorously, for each $p$ there exists an $r(p)$ such that $\cC_{r(p)} \subset \cC$ is a set of tubes of dyadic thickness $r(p)$ covering $\T_p$ with $|\T_p \cap \cC_{r(p)}| \ge \frac{1}{\log \delta^{-1}} |\T_p|$. Then $\T_p \cap \cC_{r(p)}$ is a $(\delta, \alpha, \delta^{-2\eps})$-set.
    
    We now dyadically pigeonhole the $r(p)$. In particular, there exists an $r \in [\de,1]$ such that 
    \[
    P' = |\{p \in P : r(p) = r\}| \geq \frac{1}{\log \de^{-1}}|P|.
    \]
    Then $P'$ is a $(\delta, \beta, \delta^{-2\eps})$-set. Let $\cC_r\subset \cC$ be the subset of tubes of thickness $r$. 
    
    Therefore, we may apply Lemma \ref{furstenberg_cardinality} to $\cC_r$, and we conclude that there exists an $\epsilon$ such that $|\cC_r| \ge r^{-\gamma(\alpha, \beta)+\xi}$, so $$\sum_{T \in \cC_r} r(T)^{\gamma(\alpha, \beta)} \ge |\cC_r| \cdot r^{\gamma(\alpha, \beta)} \ge \delta^{\xi}.$$ Hence, since $\cC$ is an arbitrary cover satisfying the RHS of Corollary \ref{cor:discrete frostman A(n, 1)}, it follows that $K\geq \de^\xi$. Hence, $\T$ contains a $(\de, \gamma(\alpha,\beta), O_n(\de^{-\xi}))$-set $\T$.
\end{proof}

\begin{remark}
    \rm{In the proof of Theorem \ref{main_disc}, we let $\alpha = \sigma \in [0,n-1]$, and $\beta = s \in (0,n]$. Note that these values of $\alpha$ and $\beta$ are in the domain of the above theorem.}
\end{remark}

We remark that, by Lemma \ref{furstenberg} and a verbatim proof of \cite[Lemma 3.3]{hera2020improved}, the following corollary regarding dual Furstenberg sets of lines holds. 

\begin{corollary}
    Let $\alpha \in [0,n-1]$ and $\beta \in (0,n]$. Suppose $E$ is a $\beta$-dimensional subset of $\R^n$ and $\mathcal{L}$ is a collection of lines with the property that for every point $x \in E$ there exists a $\alpha$-dimensional collection of lines $\mathcal{L}_x \subset \mathcal{L}$ such that every line in $\mathcal{L}_x$ passes through $x$. Then $\dim \mathcal{L} \geq \gamma(\alpha, \beta)$ where $\gamma(\alpha, \beta) = \alpha + \min \{\alpha, \beta \}$.   
\end{corollary}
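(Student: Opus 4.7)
The plan is to deduce this continuous statement from its $\delta$-discretized counterpart Lemma \ref{furstenberg} by a Hausdorff-content argument in the spirit of \cite[Lemma 3.3]{hera2020improved}. I would argue by contradiction. Assume $\dim \mathcal{L} < \gamma(\alpha,\beta) - 2\eta$ for some $\eta > 0$, and fix auxiliary small parameters $\epsilon, \xi > 0$ eventually satisfying $O_n(\epsilon + \xi) < \eta$, so that Lemma \ref{furstenberg}, applied with $(\alpha - \epsilon, \beta - \epsilon)$ and tolerance $\xi$, still produces tube sets of dimension exceeding $\gamma(\alpha,\beta) - 2\eta$.

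First, I would discretize $E$: Frostman's lemma provides a probability measure $\mu$ supported on a Borel subset of $E$ satisfying $\mu(B(x,r)) \lesim r^{\beta-\epsilon}$, and a standard pigeonholing extracts, at each small dyadic scale $\delta > 0$, a $\delta$-separated $(\delta, \beta - \epsilon, \delta^{-\epsilon})$-set $P \subset \spt(\mu)$. Second, I would discretize each $\mathcal{L}_x$: since $\dim \mathcal{L}_x \geq \alpha$, a Frostman measure on $\mathcal{L}_x$ inside the affine Grassmannian $\mathcal{A}(n,1)$ (which is locally Euclidean) yields, for every $x \in P$, a $(\delta, \alpha - \epsilon, \delta^{-\epsilon})$-set of $\delta$-tubes $\T_x \subset \mathcal{L}$, each passing through the $\delta$-neighborhood of $x$, which is precisely the hypothesis of Lemma \ref{furstenberg}. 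Applying that lemma produces a $(\delta, \gamma(\alpha,\beta) - O(\epsilon) - \xi, O_n(\delta^{-\xi}))$-subset $\T' \subset \bigcup_{x \in P} \T_x \subset \mathcal{L}$.

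To close the contradiction I would use the contrapositive of Corollary \ref{cor:discrete frostman A(n, 1)}: any $(\delta, s, K')$-subset $\T' \subset \mathcal{L}$ forces every dyadic cover $\mathcal{C}$ of $\mathcal{L}$ by tubes of radii in $[\delta, 1]$ to satisfy $\sum_{T \in \mathcal{C}} r(T)^s \gesim (K')^{-1}$, by a direct double-count of $\delta$-tubes in $\T'$ against $\mathcal{C}$. Taking $s = \gamma(\alpha, \beta) - 2\eta$ and $K' = O_n(\delta^{-\xi})$ then forces $\sum_T r(T)^s \gesim \delta^{\xi}$. On the other hand, $\dim \mathcal{L} < s$ yields $\cH^s_\infty(\mathcal{L}) = 0$, so for every $\delta$ small enough there is a dyadic $[\delta,1]$-tube cover with $\sum r(T)^s < \delta^{2\xi}$, contradicting the discrete lower bound.

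The main obstacle I anticipate is the bookkeeping of the three small parameters $(\eta, \epsilon, \xi)$ across several layers of dyadic pigeonholing and, more subtly, upgrading a single continuous Frostman measure on $\mathcal{L}_x$ to a $(\delta, \alpha - \epsilon, \delta^{-\epsilon})$-set satisfying the non-concentration condition at every scale $r \geq \delta$ uniformly in $x \in P$, and likewise reducing arbitrary Hausdorff-content covers to dyadic $[\delta,1]$-tube covers in the final step. Both of these transfers from continuous to discrete are carried out in the analogous $k$-plane setting of \cite[Lemma 3.3]{hera2020improved}, and I would import that template verbatim.
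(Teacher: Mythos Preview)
Your proposal is correct and matches the paper's own approach: the paper does not give a detailed argument for this corollary, but states that it follows from Lemma~\ref{furstenberg} together with a verbatim Hausdorff-content argument from \cite[Lemma 3.3]{hera2020improved}, which is exactly the scheme you outline. The bookkeeping concerns you flag (uniform discretization of the $\mathcal{L}_x$ and reduction to dyadic $[\delta,1]$-covers) are precisely the points handled in that reference, so importing that template is the intended route.
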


\section{Proof of Theorem \ref{main}: The Case $k=n-1$}\label{sec5}

We now combine the results of Sections \ref{sec3} and \ref{sec4} to prove the following discretized version of Theorem \ref{main} when $k=n-1$.

\begin{theorem}\label{main_disc}
    For all $t\in (n-1,n]$, $\sigma \in [0,n-1)$, and $\zeta >0$, there exists $\epsilon = \epsilon(\sigma, t,\zeta) >0$  and $\de_0 \in 2^{-\N}$ small enough such that the following holds for all $\de \in (0,\de_0]$.

    Let $s\in [0,n]$. Let $P_K\subset B^n$ be a $\de$-separated $(\de, t,\de^{-\epsilon})$-set, and let $P_E \subset B^n$ be a $\de$-separated $(\de, s, \de^{-\epsilon})$-set. Assume that for every $x\in P_E$, there exists a $(\de, \sigma, \de^{-\epsilon})$-set of tubes $\T_x$ with the properties $x\in T$ for all $T\in \T_x$, and 
    \[
    |T\cap P_K| \geq \de^{\sigma + \epsilon}|P_K|
    \]
    for all $T\in \T_x$. Then, $\sigma \geq s+t - n+1 - \zeta$.
\end{theorem}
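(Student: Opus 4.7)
The plan is to combine the dual Furstenberg estimate (Lemma \ref{furstenberg}) and the incidence bound (Theorem \ref{incidence2}) via a double-counting argument, mirroring the strategy of \cite[Section 3]{orponen2022kaufman}. Fix an auxiliary parameter $\xi > 0$ to be chosen small relative to $\min\{n-1-\sigma,\, t-(n-1),\, \zeta\}$. First, I would apply Lemma \ref{furstenberg} with $\alpha := \sigma$ and $\beta := s$ to the family $\{\T_x\}_{x \in P_E}$, extracting a sub-collection $\T' \subset \bigcup_{x \in P_E} \T_x$ that is a $(\delta, \gamma(\sigma, s), O_n(\delta^{-\xi}))$-set of $\delta$-tubes, where $\gamma(\sigma, s) = \sigma + \min\{\sigma, s\}$.

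Next, each $T \in \T'$ lies in some $\T_x$, so the rich-tube hypothesis gives $|T \cap P_K| \geq \delta^{\sigma + \epsilon} |P_K|$, and summing over $T \in \T'$ produces the lower bound
\begin{equation*}
    I(P_K, \T') \geq \delta^{\sigma + \epsilon} |P_K| |\T'|.
\end{equation*}
After standard dyadic pigeonholing to normalize $|P_K| \sim \delta^{-t}$ and $|\T'| \sim \delta^{-\gamma(\sigma, s)}$ (losing only $\delta^{-O(\epsilon + \xi)}$ factors), I would apply Theorem \ref{incidence2} with parameters $(\alpha, \beta) = (t, \gamma(\sigma, s))$, yielding the matching upper bound
\begin{equation*}
    I(P_K, \T') \lesim |P_K| |\T'| \, \delta^{\min\{n-1,\, (t + \gamma(\sigma,s) - n + 1)/2\} - O(\epsilon + \xi)}.
\end{equation*}
Comparing the two bounds gives
\begin{equation*}
    \sigma \geq \min\left\{n - 1,\; \tfrac{1}{2}(t + \gamma(\sigma, s) - n + 1)\right\} - O(\epsilon + \xi).
\end{equation*}

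The proof concludes by a two-case analysis on whether $\sigma \leq s$ or $\sigma > s$. If $\sigma \leq s$, then $\gamma = 2\sigma$ and the bound becomes $\sigma \geq \min\{n-1,\, \sigma + (t-(n-1))/2\} - O(\epsilon + \xi)$; the second branch forces $t \leq n - 1 + O(\epsilon + \xi)$, contradicting $t > n-1$, while the first forces $\sigma \geq n - 1 - O(\epsilon + \xi)$, contradicting $\sigma < n - 1$. Both are impossible once $\epsilon + \xi$ is small relative to $\min\{t-(n-1),\, n-1-\sigma\}$, so this case is vacuous. If $\sigma > s$, then $\gamma = \sigma + s$; the first branch again contradicts $\sigma < n - 1$, while the second yields $\sigma \geq s + t - n + 1 - O(\epsilon + \xi)$, giving the theorem once $\epsilon + \xi \leq \zeta/C$ for an appropriate constant $C$.

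The main obstacle is purely technical: careful tracking of the $\epsilon$ and $\xi$ losses through the pigeonholing, the Furstenberg application, and the incidence application, and making sure the final choice $\epsilon = \epsilon(\sigma, t, \zeta)$ simultaneously beats all three thresholds identified above. No essentially new ideas are needed beyond the higher-dimensional tools established in Sections \ref{sec3} and \ref{sec4}, which are precisely the ingredients that replace the planar Fu--Ren incidence estimate and the planar dual Furstenberg bound used by Orponen--Shmerkin--Wang.
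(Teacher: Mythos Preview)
Your proposal is correct and follows essentially the same approach as the paper's own proof: apply Lemma~\ref{furstenberg} to extract a $(\delta,\gamma(\sigma,s),\delta^{-\xi})$-set of tubes, get a lower bound on $I(P_K,\T')$ from the rich-tube hypothesis, compare against the upper bound from Theorem~\ref{incidence2}, and run a case analysis on whether the minimum in $\kappa$ equals $\tfrac12$ or $\tfrac{n-1}{t+\gamma-(n-1)}$ and whether $\gamma=2\sigma$ or $\gamma=\sigma+s$. The only cosmetic difference is that the paper splits first on the value of $\kappa$ and then on $s\lessgtr\sigma$, whereas you split in the opposite order; the contradictions and the surviving inequality $\sigma\ge s+t-(n-1)-\zeta$ are identical.
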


Before proving this discretized result, we first recall some important preliminaries from the paper of Orponen, Shmerkin, and Wang.

\subsection{Preliminaries}

We write $\mathcal P(X)$ for the family of Borel probability measures on a metric space $(X,d)$. We use the following notation: if $X,Y\subset \R^n$ and $G\subset X\times Y$, we write 
\[
G|_x := \{y\in Y : (x,y) \in G\} \hspace{.25cm} \text{and} \hspace{.25cm} G|^y := \{x\in X : (x,y) \in G\}
\]
for $x\in X$ and $y\in Y$. If $G\subset \R^n \times \R^n$ is a Borel set, then so too are $G|_x$ and $G|^y$ as $\Bor(\R^n \times \R^n) = \Bor(\R^n) \times \Bor(\R^n).$

\begin{definition}[Thin tubes] Let $K,t\geq 0$ and $c\in (0,1]$. Let $\mu,\nu\in \mathcal P(\R^n)$ with $\spt(\mu) =: X$ and $\spt(\nu) =: Y$. We say that $(\mu,\nu)$ has $(t,K,c)$-\textbf{thin tubes} if there exists a Borel set $G\subset X\times Y$ with $(\mu \times \nu) (G) \geq c$ with the following property: If $x\in X$, then 
\[
\nu(T\cap G|_x) \leq K \cdot r^t \text{~~for all $r>0$ and all $r$-tubes $T$ containing $x$.}
\]
We also say that $(\mu,\nu)$ has $t$-thin tubes if $(\mu,\nu)$ has $(t,K,c)$-thin tubes for some $K,c>0$.
\end{definition}

We also recall Remark 2.3 from Orponen, Shmerkin, and Wang due to its importance in our proofs.
\begin{remark}[\cite{orponen2022kaufman}, Remark 2.3] \label{thintubesimplication}
    \rm{Assume that $\mu,\nu\in \mathcal P(\R^n)$ has $t$-thin tubes for some $t>0$. Then, there exists $x\in \spt (\mu)$ such that $\mathcal H^t(\pi_x(Y\setminus \{x\}))>0$. In other words, this implies 
    \[
    \sup_{x\in X} \dim \pi_x(Y\setminus\{x\}) \geq t.
    \]}
\end{remark}

\subsection{Proof of Theorem \ref{main_disc}}

We prove Theorem \ref{main_disc} analogous to Orponen, Shmerkin, and Wang with a slight change in numerology.

\begin{proof}[Proof of Theorem \ref{main_disc}]
    Fix $s\in [0,n]$, $t\in (n-1,n]$, $\sigma \in [0,n-1)$, and $\zeta>0$. Let $P_K$ and $P_E$ be as in the statement of the theorem-- namely, let $P_K \subset B^n$ be a $\de$-separated $(\de,t,\de^{-\epsilon})$-set and let $P_E\subset B^n$ be a $\de$-separated $(\de, s, \de^{-\epsilon})$-set. Furthermore, suppose that for every $x\in P_E$, there exists a $(\de, \sigma, \de^{-\epsilon})$-set of tubes $\T_x$ with the properties that $x\in T$ for all $T\in \T_x$, and 
    \begin{equation}\label{furst2}
    |T\cap P_K|\geq \de^{\sigma + \epsilon}|P_K|
    \end{equation}
    for all $T\in \T_x$. Given $\T_x$ is a $(\de,\sigma,\de^{-\epsilon})$-set of tubes and $P_K$ is a $(\de, s, \de^{-\epsilon})$-set of points, by Lemma \ref{furstenberg}, $\T' = \bigcup_{x\in P}\T_x$ contains a $(\de, \gamma(\sigma,s),\de^{-\xi})$-set of tubes, which we denote by $\mathbb{T},$ where $\gamma(\sigma,s) = \sigma + \min\{s,\sigma\}$. 
    
    We now state our conditions on $\epsilon$:
    \begin{equation}\label{epsilon_reqs}
        10 \xi + 2\epsilon \leq \zeta, \text{~~~and~~~} \sigma < n-1 - 5\xi - \epsilon, \text{~~~and~~~} \frac{t-n+1}{2} - 5\xi - \epsilon >0.
    \end{equation}
    Such an $\epsilon$ exists as $\sigma < n-1$ and $t>n-1$.

    By \eqref{furst2}, we have 
    \[
    |P_K| |\T| \de^{\sigma + \epsilon} \leq \sum_{T\in \T} |T\cap P_K| = I(P_K,\T).
    \]
    We now compare this lowerbound against the upperbounds from Theorem \ref{incidence}. Since $P_K$ is a $(\de,t)$-set and $\T$ is a $(\de, \gamma(\sigma,s))$-set, we consider the quantity:
    \[
    \overline{\kappa} = \kappa(t, \gamma(\sigma,s)) = \min \left\{\frac{1}{2}, \frac{n-1}{t-n+1+ \gamma(\sigma,s)}\right\}.
    \]
We break up into three cases.
\begin{enumerate}
    \item[i)] Suppose that $\overline \kappa = \frac{n-1}{t-n+1 + \gamma(\sigma, s)}$. Then, regardless of the value of $\gamma(\sigma,s)$, we have 
    \[
    |P_K||\T|\de^{\sigma + \epsilon} \leq I(P_K,\T) \leq |P_K||\T| \de^{\overline{\kappa} (t-n+1 + \gamma(\sigma,s)) -5\xi} = |P_K||\T|\de^{n-1 - 5\xi}
    \]
    Hence, $\sigma \geq n-1-5\xi$.
    By the assumption \eqref{epsilon_reqs}, this is a contradiction.

    \item[ii)] Suppose that $\overline{\kappa} = 1/2$. Then, we break into cases.
    \begin{enumerate}
        \item Assume $s\leq \sigma$ so $\gamma(\sigma,s) = \sigma + s$. Then, we have 
        \[
        |P_K||\T|\de^{\sigma + \epsilon} \leq I(P_K,\T) \leq |P_K||\T|\de^{\overline{\kappa} (t-n+1 + \gamma(\sigma,s) )-5\xi} = |P_K||\T|\de^{(t-n+1 + \sigma+s)/2 - 5\xi}.
        \]
        Hence, $$\sigma \geq s+t -n+1 - 10\xi - 2 \epsilon \geq s+ t-n+1 - \zeta$$ where here we use the assumption that $\zeta \geq 10 \xi - 2\epsilon$. This gives the desired result.
        \item Assume now that $s\geq \sigma$, so $\gamma(\sigma,s) = 2\sigma$. Then, we have 
        \[
|P_K||\T|\de^{\sigma + \epsilon} \leq I(P_K,\T) \leq |P_K||\T|\de^{\overline{\kappa} (t-n+1 + 2\sigma) -5\xi}.
        \]
        Hence, $\sigma \geq (t-n+1)/2 -5\xi - \epsilon$, which contradicts \eqref{epsilon_reqs}.
    \end{enumerate}
\end{enumerate}
This completes the proof of Theorem \ref{main_disc}.    
 \end{proof}

\subsection{A Discretized Version of Theorem \ref{main}}

We now prove Theorem \ref{thintubesversion}.

\begin{theorem}\label{thintubesversion}
    Let $s\in [0,n]$, $t\in (n-1,n]$, $0\leq \sigma < \min \{s + t - n+1, n-1\}$, $C>0$, and $\epsilon \in (0,1]$. Then, there exists $K = K(C, \epsilon, s, \sigma, t) >0$ such that the following holds. Assume $\mu,\nu \in \mathcal P(B^n)$ satisfy 
    \[
    \mu(B(x,r)) \leq Cr^s \text{~~and~~} \nu(B(x,r)) \leq Cr^t.
    \]
    Then, $(\mu,\nu)$ has $(\sigma, K,1-\epsilon)$-thin tubes.

    In particular, whenever $s\in [0,n]$, $t\in (n-1,n]$, $I_s(\mu)<\infty$ and $I_t(\nu)<\infty$, then $(\mu,\nu)$ has $\sigma$-thin tubes for all $0 \leq \sigma < \min \{s + t-n+1,n-1\}$.
\end{theorem}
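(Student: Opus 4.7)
The plan is to prove the first (Frostman) assertion by contradiction against Theorem~\ref{main_disc}, and to deduce the energy assertion by restricting to Frostman subsets.

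\emph{Setup and construction of $G$.} Fix $\sigma' \in (\sigma, \min\{s+t-n+1, n-1\})$ and $\zeta := \tfrac{1}{2}(s+t-n+1-\sigma') > 0$, so that $s+t-n+1-\zeta > \sigma'$. Apply Theorem~\ref{main_disc} with parameters $s, t, \sigma'$, and $\zeta$ to extract $\eps_0, \delta_1 > 0$. For a large constant $K = K(\eps, s, t, \sigma, \sigma')$ to be fixed, define
\[
G := \{(x,y) \in \supp(\mu) \times \supp(\nu) : \nu(T) \le K r^\sigma \text{ for every } r \in (0, 1] \text{ and every } r\text{-tube } T \ni x, y\}.
\]
If $x \in \supp(\mu)$ and $T$ is an $r$-tube with $x \in T$, then either $T \cap G|_x = \emptyset$, or else any $y \in T \cap G|_x$ witnesses $\nu(T) \le K r^\sigma$ via the defining property of $G$; in either case $\nu(T \cap G|_x) \le K r^\sigma$, so the $(\sigma, K, \cdot)$-thin-tubes condition holds on $G$ unconditionally, and it remains only to show $(\mu \times \nu)(G) \ge 1 - \eps$.

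\emph{Scale-wise bound.} Let $H_\delta := \{(x,y) : \exists \delta\text{-tube } T \ni x, y,\ \nu(T) > c_n K \delta^\sigma\}$ for a dimensional constant $c_n$ accounting for dyadic rounding, so that $G^c \subset \bigcup_j H_{2^{-j}}$. For $\delta \in (\delta_2, 1]$ (finitely many dyadic scales, with $\delta_2$ to be chosen), the Frostman bound $\nu(T) \lesim \delta^{t-1}$ on $\delta$-tubes together with $K$ sufficiently large forces $H_\delta = \emptyset$. For $\delta \le \delta_2$, the goal is to show $(\mu \times \nu)(H_\delta) \lesim \delta^\eta$ for some $\eta = \eta(s, t, \sigma, \sigma') > 0$. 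Supposing otherwise at some scale $\delta$, Fubini and pigeonholing yield $X_\delta \subset \supp(\mu)$ with $\mu(X_\delta) \ge \tfrac{1}{2}\delta^\eta$ such that $\nu(H_\delta|_x) \ge \tfrac{1}{2}\delta^\eta$ for each $x \in X_\delta$. Using the Frostman condition on $\mu$, extract a $\delta$-separated $(\delta, s, \delta^{-\eps_0})$-set $P_E \subset X_\delta$ of cardinality $\sim \mu(X_\delta)\delta^{-s}$ (choosing $\eta < \eps_0/2$ absorbs the $\mu(X_\delta)$-loss into the Frostman constant), and similarly extract a $(\delta, t, \delta^{-\eps_0})$-set $P_K$ of size $\sim \delta^{-t}$ from $\supp(\nu)$. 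For each $x \in P_E$, cover $H_\delta|_x$ by heavy $\delta$-tubes through $x$, dyadically pigeonhole the $\nu$-weights to a common scale $M \ge c_n K \delta^\sigma$, and extract a $(\delta, \sigma', \delta^{-\eps_0})$-set $\T_x$ of heavy tubes via a direction-space Hausdorff-content argument in the spirit of Lemma~\ref{furstenberg} (the bound $\nu(T') \lesim r^{t-1}$ for $r$-tubes $T'$ controls directional concentration). Each $T \in \T_x$ satisfies $|T \cap P_K| \gesim \nu(T)\delta^{-t} \gesim K \delta^{\sigma-t}$, which exceeds $\delta^{\sigma'+\eps_0}|P_K|$ since $\sigma < \sigma'$. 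Theorem~\ref{main_disc} then forces $\sigma' \ge s + t - n + 1 - \zeta > \sigma'$, a contradiction.

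\emph{Energy reduction and main obstacle.} Summing $(\mu \times \nu)(H_\delta) \lesim \delta^\eta$ over dyadic scales $\delta \le \delta_2$ gives $(\mu \times \nu)(G^c) \le \eps$ after choosing $K, \delta_2$ appropriately. For the ``in particular'' assertion, finite $s$-energy of $\mu$ and finite $t$-energy of $\nu$ allow restriction of $\mu$ and $\nu$ to subsets of positive measure on which the pointwise Frostman conditions $\mu(B(x,r)) \le Cr^s$ and $\nu(B(x,r)) \le Cr^t$ hold; applying the first assertion to the renormalized restrictions and undoing the normalization yields $\sigma$-thin tubes for the original $(\mu, \nu)$. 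The principal obstacle is the extraction of $\T_x$ as a $(\delta, \sigma', \delta^{-\eps_0})$-set of tubes through $x$: this step, analogous to Lemma~\ref{furstenberg} but carried out in directional rather than spatial coordinates, must be calibrated so that the gap $\sigma' - \sigma > 0$ absorbs the various polynomial losses, and is the main place where the strict inequality $\sigma < \min\{s+t-n+1, n-1\}$ is used beyond its role in producing the final contradiction.
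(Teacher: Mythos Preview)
Your one-shot argument has a genuine gap at the step you yourself flag as ``the principal obstacle'': the extraction of a $(\delta,\sigma',\delta^{-\eps_0})$-set $\T_x$ of heavy tubes through $x$. The only directional non-concentration you invoke is the Frostman bound $\nu(T')\lesim r^{t-1}$ for $r$-tubes $T'$. After pigeonholing to a common mass $M\ge c_nK\delta^\sigma$, this gives at most $\lesim r^{t-1}/M$ heavy $\delta$-tubes inside any $r$-tube through $x$, while the total count is $\gtrsim \delta^\eta/M$; the resulting set of directions is therefore a $(\delta,t-1,\delta^{-O(\eta)})$-set, \emph{not} a $(\delta,\sigma',\cdot)$-set when $\sigma'>t-1$. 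But the range of the theorem allows $\sigma$ (hence $\sigma'$) as large as $\min\{s+t-n+1,n-1\}$, which exceeds $t-1$ whenever $s>n-2$. In that regime you cannot feed $\T_x$ into Theorem~\ref{main_disc} with parameter $\sigma'$, and feeding it in with parameter $t-1$ only yields $t-1\ge s+t-n+1-\zeta$, which is no contradiction.

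This is exactly why the paper does \emph{not} argue in one shot. It proves an $\epsilon$-improvement lemma (Lemma~\ref{epsilon_improve}): assuming $(\mu,\nu)$ already has $(\sigma,K,1-\epsilon)$-thin tubes, one contradicts Theorem~\ref{main_disc} to upgrade to $(\sigma+\eta,K',1-4\epsilon)$-thin tubes. The point is that the \emph{assumed} $(\sigma,K,1-\epsilon)$-thin-tubes property is precisely what provides the missing directional non-concentration: for good $x$ one has $\nu(T'\cap G|_x)\le Kr^\sigma$ for every $r$-tube $T'$, so inside $T'$ there are at most $\lesim Kr^\sigma/\delta^{\sigma+\eta}$ heavy $\delta$-tubes, yielding a genuine $(\delta,\sigma,\cdot)$-set $\T_x$. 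One then starts from the free $(t-1,K_0,1)$-thin tubes coming from the Frostman condition on $\nu$ and iterates finitely many times (with $\eta$ bounded below on the relevant compact set) to reach the desired $\sigma$. Your direct approach would recover only the base case of this iteration.
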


We establish the necessary $\epsilon$-improvement of the result, Lemma \ref{epsilon_improve} below, which iterated multiple times implies Theorem \ref{thintubesversion}. 

\begin{lemma}\label{epsilon_improve}
Let $s\in [0,n]$, $t\in (n-1,n]$, and $0\leq \sigma < \min \{s+ t-n+1,n-1\}$. Let $\epsilon \in (0,\frac{1}{10})$ and $C,K>0.$ Let $\mu,\nu\in \mathcal P(B^n)$ such that $\mu(B(x,r)) \leq Cr^s$ and $\nu(B(y,r)) \leq Cr^t$ for all $x,y\in \R^n$ and $r>0$. If $(\mu,\nu)$ has $(\sigma , K, 1-\epsilon)$-thin tubes, then there exists $\eta = \eta(s,\sigma,t)>0$ and $K' >0$ such that $(\mu,\nu)$ has $(\sigma + \eta, K', 1-4\epsilon)$-thin tubes. Furthermore, $\eta(s,\sigma,t)$ is bounded away from zero on any compact subset of 
\[
\Omega := \{(s,\sigma,t) \in [0,n]\times [0,n-1]\times (n-1,n] : \sigma < \min \{s + t-n+1, n-1\}\}.
\]
\end{lemma}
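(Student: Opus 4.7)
The plan is to argue by contradiction using the discretized incidence result, Theorem \ref{main_disc}, together with a dyadic pigeonholing as in \cite[Proposition 2.8]{orponen2022kaufman}. Let $G$ witness the $(\sigma, K, 1-\epsilon)$-thin tubes hypothesis, and suppose for contradiction that $(\mu,\nu)$ does not have $(\sigma+\eta, K', 1-4\epsilon)$-thin tubes for parameters $\eta, K'$ to be chosen. The failure, applied to the natural candidate $G \setminus \{(x,y) : y \text{ lies in some bad tube through } x\}$, yields by Fubini and pigeonholing a Borel set $X' \subset X$ with $\mu(X') \gtrsim \epsilon$ such that, for each $x \in X'$, there is a scale $r_x > 0$ and a nonempty family of $r_x$-tubes $T$ through $x$ with $\nu(T \cap G|_x)$ in the intermediate band $[K' r_x^{\sigma+\eta}, K r_x^{\sigma}]$, the upper bound coming for free from the original thin tubes hypothesis. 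Dyadic pigeonholing in both the scale $r_x$ and the mass level localizes to a single scale $\delta \le \delta_0$, a single exponent $\sigma' \in [\sigma+\eta, \sigma]$, and a subset $X'' \subset X'$ of $\mu$-mass $\gtrsim \epsilon (\log \delta^{-1})^{-2}$ on which $\nu(T \cap G|_x) \sim \delta^{\sigma'}$ for every chosen $\delta$-tube.

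Next, discretize. Using the $s$-Frostman property of $\mu$ on $X''$ and the $t$-Frostman property of $\nu$, a standard procedure (cf.\ \cite{orponen2022kaufman}, Section 2) produces a $\delta$-separated $(\delta, s, \delta^{-\epsilon_0})$-set $P_E \subset X''$ and a $\delta$-separated $(\delta, t, \delta^{-\epsilon_0})$-set $P_K$ representing $\nu$, where $\epsilon_0 \ll \zeta$ is chosen to absorb logarithmic losses and the loss $4\epsilon$ in the mass of $G$. The mass estimate $\nu(T \cap G|_x) \sim \delta^{\sigma'}$ then transfers, via the upper Frostman bound on $\nu$, to the incidence lower bound $|T \cap P_K| \ge \delta^{\sigma' + \epsilon_0}|P_K|$ for each $x \in P_E$ and each tube $T$ in the surviving family $\mathcal{T}_x$ of $\delta$-tubes through $x$.

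The heart of the bootstrap is the observation that each $\mathcal{T}_x$ is automatically a $(\delta, \sigma', O(K))$-set of tubes. Indeed, for any $r \ge \delta$ and any $r$-tube $T_r$ containing $x$, the original thin tubes hypothesis yields $\nu(T_r \cap G|_x) \le K r^{\sigma}$, while each $\delta$-subtube of $T_r$ in $\mathcal{T}_x$ contributes $\nu$-mass $\sim \delta^{\sigma'}$; hence the number of such subtubes is at most $K r^\sigma \delta^{-\sigma'} \le K (r/\delta)^{\sigma'}$, using $\sigma' \le \sigma$. Now Theorem \ref{main_disc}, applied with parameters $(s, t, \sigma')$ and error $\zeta$ chosen so that $\sigma + \zeta < \min\{s + t - n + 1,\, n - 1\}$ (possible because $(s,\sigma,t) \in \Omega$), gives $\sigma' \ge s + t - n + 1 - \zeta$, contradicting $\sigma' \le \sigma$. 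Setting $\eta := \tfrac{1}{2}\zeta$ produces the advertised improvement, and $K'$ is extracted from the quantitative losses. Uniformity of $\eta$ on compact subsets of $\Omega$ follows because $\zeta$ may be chosen as a fixed fraction of the continuous positive quantity $\min\{s+t-n+1-\sigma,\, n-1-\sigma\}$.

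The main obstacle is the careful bookkeeping required to propagate Frostman constants, logarithmic losses, and the parameters $K, K', \epsilon_0$ through the pigeonholing and discretization steps so that the hypotheses of Theorem \ref{main_disc} are satisfied with $\zeta$ small enough to close the contradiction. The Katz--Tao-style step identifying $\mathcal{T}_x$ as a $(\delta, \sigma', O(K))$-set is short once isolated, but it is precisely what makes the $\epsilon$-improvement scheme possible; without the inequality $\sigma' \le \sigma$, the bound on angular concentration would be lost and the bootstrap could not close.
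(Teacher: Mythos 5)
Your overall strategy matches the paper's: the paper simply defers to the reduction argument of Orponen--Shmerkin--Wang's Lemma 3.21 and then invokes Theorem~\ref{main_disc}, whereas you spell out the pigeonholing and the Katz--Tao step in more detail. However, there is a sign error that runs through the core of your argument and needs fixing before it closes.

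After the dyadic pigeonholing you pin the tube mass to $\nu(T \cap G|_x) \sim \delta^{\sigma'}$ where $\delta^{\sigma'}$ lies in the band $[K' \delta^{\sigma+\eta},\, K\delta^{\sigma}]$. Since $\delta < 1$, a larger exponent gives a \emph{smaller} mass, so the lower bound $K'\delta^{\sigma+\eta}$ on mass gives the \emph{upper} bound $\sigma' \lesssim \sigma+\eta$ on the exponent, and the upper bound $K\delta^{\sigma}$ on mass gives the \emph{lower} bound $\sigma' \gtrsim \sigma$. Hence $\sigma' \in [\sigma - o(1),\, \sigma + \eta + o(1)]$, i.e.\ $\sigma' \ge \sigma$, not $\sigma' \le \sigma$ as you claim. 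This breaks the ``heart of the bootstrap'': your estimate $K r^{\sigma}\delta^{-\sigma'} \le K (r/\delta)^{\sigma'}$ requires $r^{\sigma} \le r^{\sigma'}$, i.e.\ $\sigma \ge \sigma'$, which fails. What the counting actually gives is
\[
|\T_x \cap T_r| \lesssim K r^{\sigma} \delta^{-\sigma'} = K (r/\delta)^{\sigma} \cdot \delta^{\sigma-\sigma'} \lesssim K (r/\delta)^{\sigma} \cdot \delta^{-\eta - o(1)},
\]
so $\T_x$ is a $(\delta, \sigma, K\delta^{-O(\eta)})$-set, \emph{not} a $(\delta,\sigma',O(K))$-set; a $(\delta,\sigma,\cdot)$-set is not automatically a $(\delta,\sigma',\cdot)$-set when $\sigma' > \sigma$. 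Correspondingly, Theorem~\ref{main_disc} should be applied with parameter $\sigma$ (not $\sigma'$): you have a $(\delta,\sigma,\delta^{-\epsilon})$-set of tubes provided $\eta$ is small relative to $\epsilon(\sigma,t,\zeta)$, and the incidence lower bound $|T\cap P_K| \gtrsim \delta^{\sigma'+\epsilon_0}|P_K| \ge \delta^{\sigma+\eta+\epsilon_0+o(1)}|P_K|$ serves as the $\delta^{\sigma+\epsilon}$ input once $\eta + \epsilon_0 \le \epsilon$. The conclusion of Theorem~\ref{main_disc} is then $\sigma \ge s + t - n + 1 - \zeta$, directly contradicting the choice $\sigma < s + t - n + 1 - \zeta$; there is no need to invoke the (false) inequality $\sigma' \le \sigma$. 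With this correction the bookkeeping is: choose $\zeta$ with $\sigma < s+t-n+1-\zeta$ (possible on $\Omega$), then set $\eta$ small enough relative to $\epsilon(\sigma,t,\zeta)$, which gives the advertised uniformity on compact subsets of $\Omega$.
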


\begin{proof}[Proof of Theorem \ref{thintubesversion} assuming Lemma \ref{epsilon_improve}]
The proof is nearly identical to that of Orponen, Shmerkin, and Wang's Theorem 3.20, with the main difference that our set $\Omega$ is properly adjusted for this higher dimensional setting. In particular, we fix $0\leq \sigma < \min \{s + t - n+1, n-1\}$ in this proof. Still, we include the argument for completeness.

We automatically have that $(\mu,\nu)$ has $(t-1,K_0,1)$-thin tubes for some $K_0 \sim C$ by the Frostman condition on $\nu$, as we can take a $r$-tube $T$ and break it into $r^{-1}$ many delta balls, obtaining $\nu(T) \leq C \cdot \frac{r^t}{r} = Cr^{t-1}$. Therefore, $(\mu,\nu)$ has $(t-1, K_0,1-\overline{\epsilon})$-thin tubes for every $\epsilon \in (0,\frac{1}{10})$. Now, if $\min \{s + t-n+1,n-1\}\leq t-1$, by monotonicity of the definition of thin tubes, we are done. Otherwise, we apply Lemma \ref{epsilon_improve}. Fix $0\leq \sigma < \min \{s + t-n+1,n-1\}$, and let 
\[
\eta:= \inf \{\eta(s,\sigma', t) : t-1 \leq \sigma'\leq \sigma\},
\]
where $\eta(s,\sigma,t)$ comes from Lemma \ref{epsilon_improve}.
Note that $\eta >0$ as $\eta$ is bounded away from zero on compact subsets of $\Omega$. Let 
\[
\overline{\epsilon} = \epsilon \cdot 4^{-1/\eta}/100
\]
where $\epsilon$ is given in the statement. 

Applying Lemma \ref{epsilon_improve} once, we get that $(\mu,\nu)$ has $(t-1 + \eta,K_1, 1-4\overline{\epsilon})$-thin tubes for some $K_1 >0$. If $t-1 + \eta >\sigma$, we are done, as by monotonicity of thin tubes this implies $(\mu,\nu)$ has $\sigma$-thin tubes. Otherwise, as apply the application again. We repeat this process at most $N$ steps such that $N \leq 1/\eta$. So, we have that $(\mu,\nu)$ has 
\[
(t - 1 + 2^N \eta, K_N, 1-4^N \overline{\epsilon})\text{-thin tubes} \implies (\sigma, K_N, 1-4^N \overline{\epsilon})\text{-thin tubes}.
\]
Hence $(\mu, \nu)$ has $(\sigma, K_N, 1- 4^N \overline{\epsilon})$-thin tubes, which implies $(\mu, \nu)$ has $(\sigma, K_N, \epsilon)$-thin tubes. This completes the proof.    
\end{proof}

It still remains to prove Lemma \ref{epsilon_improve}.

\begin{proof}[Proof of Lemma \ref{epsilon_improve}]
    We only sketch the argument, as this result follows nearly directly from Orponen, Shmerkin, and Wang's proof of Lemma 3.21 in \cite{orponen2022kaufman}. First, we suppose for the sake of contradiction that $(\mu,\nu)$ does not have $(\sigma + \eta, K', 1-4\epsilon)$-thin tubes for $K'\geq 1$ and $\eta >0$ to be determined. Since $\sigma < s + t - n+1$, we may choose $\zeta = \zeta(s,\sigma,t) >0$ such that 
    \[
    \sigma < s + t-n+1 - \zeta.
    \]
    We then conclude the proof by applying Theorem \ref{main_disc} to reach a contradiction. To apply Theorem \ref{main_disc}, we need reduce our collection of points and tubes to ones that satisfy the hypotheses of Theorem \ref{main_disc}.

    This reduction is most of the proof of Lemma 3.21 in \cite{orponen2022kaufman}, and the method of reduction follows identically in higher dimensions. The conclusion of this reduction and Theorem \ref{main_disc} is that 
    \[
    \sigma \geq s + t -n+1 - \zeta,
    \]
    which is a contradiction.
\end{proof}

We now prove that Theorem \ref{main_disc} implies Theorem \ref{main}.

\begin{proof}
    Given $X \neq \emptyset$ and $\dim Y>n-1$, choose Frostman measures $\mu,\nu$ such that $\spt(\mu) = X$ and $\spt(\nu) = Y$. Then, by Theorem \ref{main_disc}, we have that $(\mu,\nu)$ has $\sigma$-thin tubes for all $0\leq \sigma < \min \{s + t- n+1,n-1\}$. Therefore, by Remark \ref{thintubesimplication}, this implies 
    \[
    \sup_{x\in X} \dim \pi_x(Y\setminus \{x\}) \geq \sigma.
    \]
    As $\sigma < \min \{s + t- n+1,n-1\}$ was arbitrary, this shows that
    \[
    \sup_{x\in X} \dim \pi_x(Y\ \{x\}) \geq \min \{\dim X + \dim Y  - n + 1,n-1\}.
    \]
\end{proof}

\section{Proof of Theorem \ref{main}}\label{sec6}

We now prove Theorem \ref{main} for all values of $k$ using a Marstrand projection theorem and one of the radial projection results of Orponen-Shmerkin-Wang \cite[Theorem 1.9.i]{orponen2022kaufman}. We restate the theorem here.
\begin{theorem} \label{main_final}
    Let $X,Y\subset \R^n$ be Borel sets with $X\neq \emptyset$, $\dim Y\in (k, k+1]$ for some $k \in \{0,1,\dots, n-1\}$. Then, 
    \[
    \sup_{x\in X} \dim \pi_x(Y\setminus \{x\}) \geq \min \{\dim X + \dim Y-k, k\}.
    \]
\end{theorem}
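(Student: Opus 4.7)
The plan is to reduce Theorem \ref{main_final} for general $k$ to the top-dimensional case $k = n-1$, which has already been established (as Theorem \ref{main}) in Section \ref{sec5}. The case $k = 0$ is vacuous: since $\dim X, \dim Y \geq 0$, the claimed lower bound $\min\{\dim X + \dim Y, 0\} = 0$ is trivially true. For $k \in \{1, \ldots, n-2\}$ the strategy is to orthogonally project onto a generic $(k+1)$-dimensional subspace, apply the already-known $k = n-1$ case inside that subspace, and transfer the conclusion back to $\R^n$ via a Lipschitz relation between the two radial projections.

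Concretely, I fix a subspace $V$ of dimension $k+1$ lying outside the Marstrand exceptional sets for both $X$ and $Y$, and let $P_V : \R^n \to V$ denote the orthogonal projection. Marstrand's projection theorem then gives
\[
\dim P_V(Y) = \dim Y, \qquad \dim P_V(X) = \min\{\dim X,\, k+1\},
\]
using $\dim Y \le k+1$ for the first equality. The key geometric identity is that for $x, y \in \R^n$ with $P_V(y) \neq P_V(x)$, writing $x' = P_V(x)$ and $y' = P_V(y)$, one has
\[
\pi_{x'}(y') \;=\; \frac{P_V(y-x)}{|P_V(y-x)|} \;=\; \widetilde{P}_V(\pi_x(y)),
\]
where $\widetilde{P}_V : \mathbb{S}^{n-1} \setminus V^\perp \to V \cap \mathbb{S}^{n-1}$ is the map $v \mapsto P_V(v)/|P_V(v)|$. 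Since $\widetilde{P}_V$ is Lipschitz on each level set $\{v : |P_V(v)| \ge 1/m\}$, and $\mathbb{S}^{n-1} \setminus V^\perp$ is their countable union, Lipschitz maps do not increase Hausdorff dimension and so $\dim \widetilde{P}_V(E) \le \dim E$ for Borel $E \subset \mathbb{S}^{n-1} \setminus V^\perp$. Consequently, for every $x \in X$,
\[
\dim \pi_{P_V(x)}(P_V(Y) \setminus \{P_V(x)\}) \;\leq\; \dim \pi_x(Y \setminus \{x\}).
\]

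I then apply Theorem \ref{main} inside $V \cong \R^{k+1}$: the ambient dimension in that application is $k+1$ while the relevant value of the parameter $k$ stays at $k = (k+1)-1$, placing us in the already-proven top-dimensional case. This yields
\[
\sup_{x' \in P_V(X)} \dim \pi_{x'}(P_V(Y) \setminus \{x'\}) \;\geq\; \min\{\dim P_V(X) + \dim Y - k,\; k\},
\]
and combining this with the previous Lipschitz inequality and the Marstrand value of $\dim P_V(X)$ gives
\[
\sup_{x \in X} \dim \pi_x(Y \setminus \{x\}) \;\geq\; \min\{\min\{\dim X,k+1\} + \dim Y - k,\; k\} \;=\; \min\{\dim X + \dim Y - k,\; k\},
\]
the final equality being a short case split on whether $\dim X \le k+1$ or $\dim X > k+1$, using $\dim Y > k$ in the latter subcase. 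The main obstacle I anticipate is controlling the degenerate directions where $\pi_x(y)$ points close to $V^\perp \cap \mathbb{S}^{n-1}$, at which $\widetilde{P}_V$ becomes highly distortive and the Lipschitz constant blows up; for generic $V$ these directions ought to carry negligible dimension, and it is precisely here that \cite[Theorem 1.9.i]{orponen2022kaufman} may be invoked to formally transfer the projection information and keep the generic-projection step rigorous.
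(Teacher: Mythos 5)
Your proof is correct and uses the same core device as the paper: orthogonally project onto a generic $(k+1)$-plane via Marstrand and apply the already-proven top-dimensional case inside that plane. The paper's version of this argument, however, splits into two cases up front: when $\dim X > k+1$ it invokes \cite[Theorem 1.9.i]{orponen2022kaufman} directly (giving $\sup_x \dim \pi_x(Y\setminus\{x\}) = \dim Y \geq k$), and only when $\dim X \leq k+1$ does it run the Marstrand argument, where $\dim P_V(X) = \dim X$ exactly. Your version is a genuine streamlining: by writing $\dim P_V(X) = \min\{\dim X, k+1\}$ for a.e.\ $V$ and observing at the end that $\min\{\min\{\dim X, k+1\} + \dim Y - k,\, k\} = \min\{\dim X + \dim Y - k,\, k\}$ whenever $\dim Y > k$, you handle both ranges of $\dim X$ in one sweep, eliminating the separate appeal to Theorem 1.9.i. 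You also spell out the factoring identity $\pi_{P_V(x)}(P_V(y)) = \widetilde{P}_V(\pi_x(y))$ and the countable Lipschitz decomposition needed to justify the inequality $\dim \pi_{P_V(x)}(P_V(Y)\setminus\{P_V(x)\}) \leq \dim \pi_x(Y\setminus\{x\})$, which the paper asserts without comment.

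One reassurance: the concern you raise in your final paragraph about degenerate directions near $V^\perp$ is not actually an obstacle, and you do not need Theorem 1.9.i to patch it. The identity shows $\pi_{P_V(x)}(P_V(Y)\setminus\{P_V(x)\}) = \widetilde{P}_V\bigl(\pi_x(Y\setminus\{x\}) \setminus V^\perp\bigr)$ exactly --- the directions in $V^\perp$ correspond to points $y$ with $P_V(y) = P_V(x)$, which simply do not contribute to the projected radial image --- and the countable union over $\{|P_V(v)| \geq 1/m\}$ you already wrote down gives $\dim \widetilde{P}_V(E\setminus V^\perp) \leq \dim E$ unconditionally, with no need for the discarded directions to carry small dimension. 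So your argument is complete as is.
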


\begin{proof}
    Note that if $k = n-1$, this result is precisely Theorem \ref{main}, so we suppose $k<n-1$.
    
    We break this in proof into two cases. Firstly, suppose that $\dim X > k+1$. In this case, then 
    \[
    \min \{\dim X + \dim Y-k, k\} = k
    \]
as $\dim Y\in (k,k+1]$. In this case, we apply Theorem 1.9.i from \cite{orponen2022kaufman} (which we may do as $\dim X >k+1$), obtaining that
\begin{align*}
    \sup_{x\in X} \dim \pi_x(Y\setminus \{x\}) &= \dim Y \\
    &\geq k  \\
    &= \min \{\dim X + \dim Y-k, k\}.
\end{align*}
This concludes the result.

Otherwise, suppose $\dim X\leq  k+1$. Suppose for the sake of contradiction that there exists subsets $X,Y\subset \R^n$ with $X\neq \emptyset$, $\dim X \leq k+1$, and $\dim Y \in (k,k+1]$ such that 
\[
\sup_{x\in X} \dim \pi_x(Y\setminus \{x\}) < \min \{\dim X + \dim Y - k, k\}.
\]
Then by \cite[Corollary 9.4]{mattila_1995}, for $\gamma_{n,k}$ almost every $V\in G(n,k+1)$, 
\[
\dim P_V(X) = \dim X \hspace{.25cm} \text{and} \hspace{.25cm} \dim P_V(Y) = \dim Y,
\]
where $P_V$ is the orthogonal projection onto the subspace $V.$

On the other hand, 
\begin{align*}
    \sup_{x\in X} \dim  \pi_{P_V(x)} (P_V(Y)) &\leq \sup_{x\in X} \dim \pi_x(Y\setminus \{x\}) \\
    &< \min \{\dim X + \dim Y - k, k\} \\
    &= \min \{\dim P_V(X) + \dim P_V(Y) - k, k\}.
\end{align*}   
This contradicts the $\R^{k+1}$ version of Theorem \ref{main} (proven in Section \ref{sec5}) in $V\simeq \R^{k+1}$ applied to $P_V(X)$ and $P_V(Y)$. This concludes the proof.
\end{proof}



\bibliographystyle{abbrv}
\bibliography{ref}

\end{document}